\newcommand{\N}{\mathbb{N}}
\newcommand{\R}{\mathbb{R}}
\newcommand{\dx}{{\rm d}x }
\newcommand{\dt}{{\rm d}t }
\newcommand{\dxi}{{\rm d}\xi }
\newtheorem{theorem}{Theorem}
\newtheorem{proposition}{Proposition}
\newtheorem{lemma}{Lemma}
\newtheorem{corollary}{Corollary}
\theoremstyle{definition}
\theoremstyle{remark}
\newtheorem{remark}{Remark}
\numberwithin{equation}{section}
\tikzset{join/.code=\tikzset{after node path={%
\ifx\tikzchainprevious\pgfutil@empty\else(\tikzchainprevious)%
edge[every join]#1(\tikzchaincurrent)\fi}}}
\tikzset{>=stealth',every on chain/.append style={join},
         every join/.style={->}}
\tikzstyle{labeled}=[execute at begin node=$\scriptstyle,
\begin{document}
\title[Projective description of Gelfand-Shilov spaces of Roumieu type]{A projective description of generalized Gelfand-Shilov spaces of Roumieu type}

\author[A. Debrouwere]{Andreas Debrouwere}
\address{Department of Mathematics, Louisiana State University, Baton Rouge, LA, U.S.A.}
\email{adebrouwere@lsu.edu}
\thanks{A. Debrouwere gratefully acknowledges support by FWO-Vlaanderen, through the postdoctoral grant 12T0519N}

\author[J. Vindas]{Jasson Vindas}
\thanks{The work of J. Vindas was supported by Ghent University, through the BOF-grants 01N01014 and 01J04017.}
\address{Department of Mathematics: Analysis, Logic and Discrete Mathematics, Ghent University, Krijgslaan 281, 9000 Gent, Belgium}
\email{jasson.vindas@UGent.be}

\subjclass[2010]{46E10, 81S30}
\keywords{Gelfand-Shilov spaces; Projective description; Short-time Fourier transform; Ultradifferentiable functions of Roumieu type}
\begin{abstract}
We provide a projective description for a class of generalized Gelfand-Shilov spaces of Roumieu type. 
 In particular, our results apply to the classical Gelfand-Shilov spaces and weighted $L^\infty$-spaces of ultradifferentiable functions of Roumieu type.
\end{abstract}
\maketitle
\section{Introduction}
In general, there is no canonical way to find an explicit and useful system of seminorms describing a given inductive limit topology. However, in many concrete cases this is possible. For weighted $(LB)$-spaces of continuous and holomorphic functions, under quite general assumptions, the topology can be described in terms of weighted sup-seminorms. This problem of \emph{projective description} goes back to the pioneer work of Bierstedt, Meise, and Summers \cite{B-M-S} and plays an important role in Ehrenpreis' theory of analytically uniform spaces \cite{Ehrenpreis,B-D}. On the other hand,  an explicit system of seminorms describing the topology of the space of ultradifferentiable functions of Roumieu type  was first found by Komatsu \cite{Komatsu3}. His proof was based on a structural theorem for the dual space and the same method was later employed by Pilipovi\'{c} \cite{Pil94} to obtain projective descriptions of Gelfand-Shilov spaces of Roumieu type. Such projective descriptions are indispensable for achieving topological tensor product representations of various important classes of vector-valued ultradifferentiable functions of Roumieu type \cite{Komatsu3,P-P-V,D-VCousin}.

The aim of this article is to provide a projective description of a general class of Gelfand-Shilov spaces of Roumieu type. More precisely,  let $(M_p)_{p \in \N}$ be a sequence of positive real numbers and let $\mathcal{V} = (v_n)_{n \in \N}$ be a pointwise decreasing sequence of positive continuous functions on $\R^d$. We study here the $(LB)$-space $\mathcal{B}^{\{M_p\}}_{\mathcal{V}}(\R^d)$ consisting of all those $\varphi \in C^\infty(\R^d)$ such that
$$
 \sup_{\alpha \in \N^d} \sup_{x \in \R^d} \frac{h^{|\alpha|}|\partial^\alpha\varphi(x)|v_n(x)}{M_{|\alpha|}} < \infty
$$ 
for some $h > 0$ and $n \in \N$. Under rather general assumptions on $M_p$ and $\mathcal{V}$, we shall give a projective description of the space $\mathcal{B}^{\{M_p\}}_{\mathcal{V}}(\R^d)$ in terms of Komatsu's family $\mathfrak{R}$ \cite{Komatsu3} and the maximal Nachbin family associated with $\mathcal{V}$ \cite{B-M-S}. We mention that we have already studied the problem in \cite[Prop. 4.16]{D-VWeightedind17}, but we will present here a new approach. Our arguments are  based on the mapping properties of the \emph{short-time Fourier transform} on $\mathcal{B}^{\{M_p\}}_{\mathcal{V}}(\R^d)$ and the projective description of weighted $(LB)$-spaces of continuous functions. We believe this is a transparent and flexible method. It has the advantage that one can work under very mild conditions on $M_p$ and $\mathcal{V}$ and that it avoids duality theory; in fact, our result can be employed to more easily study dual spaces, e.g.\ one might easily deduce structural theorems from it without resorting to a rather complicated dual Mittag-Leffler argument. 

Our general references are \cite{B-M-S} for weighted inductive limits of spaces of continuous functions, \cite{P-P-V} for Gelfand-Shilov spaces, and \cite{Grochenig} for the short-time Fourier transform.

\section{Weighted inductive limits of spaces of continuous functions}\label{sect-weighted-ind-limits}
In this section we recall a result of Bastin \cite{Bastin} concerning the projective description of weighted $(LB)$-spaces of continuous functions. This result will play a key role in the proof of our main theorem.

Let $X$ be a completely regular Hausdorff space. Given a non-negative function $v$ on $X$ we write $Cv(X)$ for the seminormed space consisting of all $f \in C(X)$ such that $\|f\|_v := \sup_{x \in X} |f(x)|v(x) < \infty$. If $v$ is positive, then $\| \, \cdot \, \|_v$ is actually a norm and if, in addition, $1/v$ is locally bounded, then $Cv(X)$ is complete and hence a Banach space. These requirements are fulfilled if $v$ is positive and continuous. A (pointwise) decreasing sequence $\mathcal{V} = (v_n)_{n \in \N}$ of  positive continuous functions on $X$ is called a \emph{decreasing weight system on $X$}. We define
$$
\mathcal{V}C(X)= \varinjlim_{n \in \N} Cv_n(X),
$$
a Hausdorff $(LB)$-space. The \emph{maximal Nachbin family associated with $\mathcal{V}$}, denoted by $ \overline{V}=\overline{V}(\mathcal{V}) $, is given by the space of all non-negative upper semicontinuous functions $v$ on $X$ such that $\sup_{x \in X} v(x)/v_n(x) < \infty$ for all $n \in \N$. 

The \emph{projective hull of $\mathcal{V}C(X)$}, denoted by $C\overline{V}(X)$, is defined as the space consisting of all $f \in C(X)$ such that  $\|f\|_v < \infty$ for all $v \in \overline{V}$. The space $C\overline{V}(X)$ is endowed with the topology generated by the system of seminorms $\{ \| \, \cdot \, \|_v \, : \, v \in \overline{V} \}$. An elementary argument by contradiction shows that $\mathcal{V}C(X)$ and $C\overline{V}(X)$ coincide algebraically and that these spaces have the same bounded sets (cf.\ \cite[Lemma 4.11]{D-VWeightedind17}). The problem of projective description in this context is to characterize the weight systems $\mathcal{V}$ for which the spaces $\mathcal{V}C(X)$ and $C\overline{V}(X)$ are equal as locally convex spaces. In this regard, there is the following result due to Bastin.

\begin{theorem}[{\cite{Bastin}}]\label{Bastin} Let $\mathcal{V} = (v_n)_{n \in \N}$ be a decreasing weight system on $X$ satisfying condition $(V)$, i.e., for every sequence of positive numbers $(\lambda_n)_{n \in \N}$ there is $v \in \overline{V}$ such that for every $n \in \N$ there is $N \in \N$ such that $\inf\{\lambda_1v_1, \ldots, \lambda_{N}v_{N} \} \leq \sup\{ v_n/n, v\}$. Then, $\mathcal{V}C(X)$ and  $C\overline{V}(X)$ coincide topologically.
\end{theorem}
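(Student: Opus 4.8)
The plan is to establish the non-trivial half of the coincidence. Each inclusion $Cv_n(X)\hookrightarrow C\overline{V}(X)$ is continuous, since $\|\varphi\|_v\le\big(\sup_{x\in X}v(x)/v_n(x)\big)\|\varphi\|_{v_n}$ for every $v\in\overline{V}$; hence the inductive limit topology of $\mathcal{V}C(X)$ is finer than that of $C\overline{V}(X)$. Combined with the algebraic equality and the coincidence of bounded sets recalled above, it thus suffices to show that every $0$-neighborhood of $\mathcal{V}C(X)$ contains a $0$-neighborhood of $C\overline{V}(X)$. Writing $B_n=\{\varphi:\|\varphi\|_{v_n}\le1\}$, the absolutely convex hulls $U_{(\lambda_n)}:=\Gamma\big(\bigcup_n\lambda_nB_n\big)$, with $\lambda_n>0$, form a basis of $0$-neighborhoods of the $(LB)$-space $\mathcal{V}C(X)$; and since $U_{(\lambda_n)}$ is absolutely convex we have $\overline{U_{(\lambda_n)}}\subseteq U_{(\lambda_n)}+U_{(\lambda_n)}=U_{(2\lambda_n)}$, so it is enough to prove: for each positive sequence $(\lambda_n)$ there is $v\in\overline{V}$ with $\{\varphi:\|\varphi\|_v\le1\}\subseteq\overline{U_{(\lambda_n)}}$.

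Fix $(\lambda_n)$. Replacing $\lambda_n$ by $\min\{\lambda_1,\dots,\lambda_n\}$ only shrinks $U_{(\lambda_n)}$, so we may assume $(\lambda_n)$ decreasing; then $(\lambda_nv_n)$ is decreasing, so $\inf_{j\le N}\lambda_jv_j=\lambda_Nv_N$, and condition $(V)$ furnishes $v\in\overline{V}$ such that for each $n$ there is $N=N(n)$, which we take increasing, with $\lambda_{N(n)}v_{N(n)}\le\max\{v_n/n,v\}$ pointwise on $X$. Now let $\varphi$ satisfy $\|\varphi\|_v\le1$; since $\varphi\in\mathcal{V}C(X)$ we have $\varphi\in Cv_m(X)$ for some $m$, say $\|\varphi\|_{v_m}=:C_0<\infty$. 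The idea is to write $\varphi=\sum_{k\ge0}\chi_k\varphi$ for a partition of unity $(\chi_k)$ of continuous functions $0\le\chi_k\le1$ subordinate to an open cover $(O_k)$ of $X$, where $(O_k)$ is built from dyadic comparisons among $v$, $v_m$ and the $v_{N(m+k)}$ — condition $(V)$'s inequality being exactly what makes such a cover exhaustive — and arranged so that $\|\chi_k\varphi\|_{v_{N(m+k)}}\le2^{-k}\lambda_{N(m+k)}$. The estimate on each $O_k$ splits according to whether $v$ or $v_{m+k}/(m+k)$ realizes the maximum: in the first case one uses $|\varphi|\,v_{N(m+k)}\le\lambda_{N(m+k)}^{-1}|\varphi|v\le\lambda_{N(m+k)}^{-1}$, in the second $|\varphi|\,v_{N(m+k)}\le\lambda_{N(m+k)}^{-1}C_0/(m+k)$, the constants and the $1/(m+k)$-decay being absorbed into the dyadic factor after the bands are chosen appropriately. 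Then $2^k\chi_k\varphi\in\lambda_{N(m+k)}B_{N(m+k)}$ and $\varphi=\sum_k2^{-k}\big(2^k\chi_k\varphi\big)$ with $\sum_k2^{-k}<\infty$ exhibits $\varphi\in\overline{U_{(\lambda_n)}}$ — provided the series converges in $\mathcal{V}C(X)$.

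That last proviso is where I expect the real work to lie: one must arrange the cover $(O_k)$ so that the partial sums $\big(\sum_{k\le K}\chi_k\big)\varphi$ converge to $\varphi$ not merely pointwise but in the topology of $\mathcal{V}C(X)$ — equivalently, so that the behavior of $\varphi$ ``at infinity on $X$'' is controlled, the tails $\bigcup_{k>K}O_k$ being forced to lie inside $\{|\varphi|\,v_{m'}<\varepsilon\}$ for one fixed index $m'$. This is precisely what the delicate form of condition $(V)$, carrying $\sup\{v_n/n,v\}$ rather than just $v$, is designed to deliver: the $v_n/n$ slack anchors each cutoff region to a fixed weight while leaving the $O_k$ enough overlap for a continuous partition of unity to exist. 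Two further technical points will need care: that the completely regular space $X$, together with the continuity of the $v_n$, supports the required partitions of unity subordinate to the $O_k$; and that the mere upper semicontinuity of $v$ — which a priori makes the natural cutoff sets $F_\sigma$ rather than open — is harmless, the slack in the estimates allowing passage to genuine open sets (and, if needed, replacement of $v$ by its upper semicontinuous envelope). A dual route via the bipolar theorem, reducing to the assertion that a functional bounded by each $\lambda_n^{-1}\|\cdot\|_{v_n}$ is bounded by some $\|\cdot\|_v$ with $v\in\overline{V}$, is equally possible, but there the same obstruction resurfaces as the need to control the purely finitely additive part of the dual of the sup-normed spaces $Cv_n(X)$, and condition $(V)$ is invoked at exactly the same point.
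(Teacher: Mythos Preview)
The paper does not supply its own proof of this theorem: it is quoted verbatim as a result of Bastin \cite{Bastin} and used as a black box in the proof of Theorem~\ref{main-theorem}. So there is no in-paper argument to compare your proposal against.

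That said, what you have written is a strategy sketch rather than a proof, and you flag this yourself. The two places you identify as ``where the real work lies'' are genuine obstacles, not just bookkeeping. First, a completely regular Hausdorff space need not admit continuous partitions of unity subordinate to an arbitrary open cover; you would have to argue that your particular cover $(O_k)$, being a countable cover by cozero sets (level sets of continuous quotients of the $v_n$), always admits one, and this requires a separate argument. Second, the convergence of $\sum_k \chi_k\varphi$ in the inductive-limit topology is exactly the hard point, and your discussion of how the $v_n/n$ slack is supposed to anchor the tails to a fixed weight remains at the level of intention rather than estimate. Until those two steps are actually carried out, the argument is incomplete.

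For orientation: Bastin's original proof does not proceed by directly exhibiting $\varphi$ as a convex series of small pieces. Instead, it shows that under condition $(V)$ the space $C\overline{V}(X)$ is bornological. Since $\mathcal{V}C(X)$ and $C\overline{V}(X)$ already coincide as sets with the same bounded sets (as the paper recalls just before the theorem), bornologicity of $C\overline{V}(X)$ immediately forces the two topologies to agree. This route sidesteps the partition-of-unity and series-convergence issues entirely, at the cost of a more abstract detour through bornological spaces.
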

\begin{remark}

Bastin also showed that if for every $v \in \overline V$ there is a positive continuous $\overline v \in \overline V$ such that $v \leq \overline{v}$, then condition $(V)$ is also necessary for the topological identity $\mathcal{V}C(X) = C\overline{V}(X)$. We mention that if $X$ is a discrete or a locally compact $\sigma$-compact Hausdorff space, then every decreasing weight system $\mathcal{V}$ on $X$ satisfies the above condition \cite[p.\ 112]{B-M-S}. 
\end{remark}

\begin{remark}\label{remark-S}
A decreasing weight system $\mathcal{V} = (v_n)_{n \in \N}$ is said to satisfy condition $(S)$ if for every $n \in \N$ there is $m > n$ such that $v_m/v_n$ vanishes at $\infty$. Every weight system satisfying $(S)$ also satisfies $(V)$, but the latter property also holds for constant weight systems (for which $(S)$ obviously fails).
\end{remark}

Let $X$ and $Y$ be completely regular Hausdorff spaces and let $\mathcal{V} =(v_n)_{n \in \N}$ and $\mathcal{W} =(w_n)_{n \in \N}$ be decreasing weight systems on $X$ and $Y$, respectively. We denote by $\mathcal{V} \otimes \mathcal{W} := (v_n \otimes w_n)_{n \in \N}$ the decreasing weight system on $X \times Y$ given by 
$v_n \otimes w_n(x,y) := v_n(x)w_n(y)$, $x \in X, y \in Y$.
\begin{remark}\label{remark-tensor-weights-1}
Let  $\mathcal{V}=(v_n)_{n \in \N}$ and $\mathcal{W}=(w_n)_{n \in \N}$ be decreasing weight systems on $X$ and $Y$, respectively. If both $\mathcal{V}$ and $\mathcal{W}$ satisfy $(V)$, then also $\mathcal{V} \otimes \mathcal{W}$ satisfies $(V)$.
\end{remark}

\begin{remark}\label{remark-tensor-weights-2}
Let $\mathcal{V}=(v_n)_{n \in \N}$ and $\mathcal{W}=(w_n)_{n \in \N}$ be decreasing weight systems on $X$ and $Y$, respectively. Then, for every $u \in \overline{V}(\mathcal{V} \otimes \mathcal{W})$ there are $v \in \overline{V}(\mathcal{V})$ and $w \in \overline{V}(\mathcal{W})$ such that $u \leq v \otimes w$.
\end{remark}
\section{Generalized Gelfand-Shilov spaces of Roumieu type}
We now introduce the class of Gelfand-Shilov spaces of Roumieu type that we are interested in. They are defined via a weight sequence and  a decreasing weight system $\mathcal{V}$ (on $\R^d$) and our aim is to give a projective description of these spaces in terms of Komatsu's family $\mathfrak{R}$ (defined below) and the maximal Nachbin family associated with $\mathcal{V}$.

Let $(M_p)_{p \in \N}$ be a \emph{weight sequence}, that is, a positive sequence that satisfies the condition $\lim_{p \to \infty} M_p / M_{p -1} = \infty$. For $h > 0$ and a non-negative function $v$ on $\R^d$ we write $\mathcal{D}^{M_p,h}_{L^\infty_v}(\R^d)$ for the seminormed space consisting of all $\varphi \in C^\infty(\R^d)$ such that
$$
\| \varphi \|_{\mathcal{D}^{M_p,h}_{L^\infty_v}} := \sup_{\alpha \in \N^d} \sup_{x \in \R^d} \frac{h^{|\alpha|}|\partial^\alpha\varphi(x)|v(x)}{M_\alpha} < \infty,
$$ 
where we write $M_\alpha = M_{|\alpha|}$, $\alpha \in \N^d$. Following Komatsu \cite{Komatsu3}, we denote by $\mathfrak{R}$ the set of all positive increasing sequences $(r_j)_{j \in \N}$ tending to infinity. For $r_j \in \mathfrak{R}$ and a non-negative function $v$ on $\R^d$ we write $\mathcal{D}^{M_p,r_j}_{L^\infty_v}(\R^d)$ for the seminormed space consisting of all $\varphi \in C^\infty(\R^d)$ such that
$$
\| \varphi \|_{\mathcal{D}^{M_p,r_j}_{L^\infty_v}} := \sup_{\alpha \in \N^d} \sup_{x \in \R^d} \frac{|\partial^\alpha\varphi(x)|v(x)}{M_\alpha\prod^{|\alpha|}_{j=0} r_j} < \infty.
$$ 
Similarly as before, $\| \, \cdot \, \|_{\mathcal{D}^{M_p,h}_{L^\infty_v}}$ and $\| \, \cdot \, \|_{\mathcal{D}^{M_p,r_j}_{L^\infty_v}}$ are actually norms if $v$ is positive while $\mathcal{D}^{M_p,h}_{L^\infty_v}(\R^d)$ and $\mathcal{D}^{M_p,r_j}_{L^\infty_v}(\R^d)$ are complete and thus Banach spaces if additionally $1/v$ is locally bounded. Given a decreasing weight system $\mathcal{V} = (v_n)_{n \in \N}$, we define, as in the introduction,
$$
\mathcal{B}^{\{M_p\}}_\mathcal{V}(\R^d) := \varinjlim_{n \to \infty} \mathcal{D}^{M_p,1/n}_{L^\infty_{v_n}}(\R^d),
$$
a Hausdorff $(LB)$-space. Moreover, we write $\widetilde{\mathcal{B}}^{\{M_p\}}_\mathcal{V}(\R^d)$ for the space consisting of all $\varphi \in C^\infty(\R^d)$ such that $\| \varphi \|_{\mathcal{D}^{M_p,r_j}_{L^\infty_v}} < \infty$ for all $r_j \in \mathfrak{R}$ and $v \in \overline{V}$, and endow it with the topology generated by the system of seminorms $\{ \| \, \cdot \, \|_{\mathcal{D}^{M_p,r_j}_{L^\infty_v}} \, : \,  r_j \in \mathfrak{R}, v \in \overline{V} \}$.

\begin{lemma}\label{equality-sets}
Let $M_p$ be a weight sequence and let $\mathcal{V} = (v_n)_{n \in \N}$ be a decreasing weight system. Then, $\mathcal{B}^{\{M_p\}}_\mathcal{V}(\R^d)$ and $\widetilde{\mathcal{B}}^{\{M_p\}}_\mathcal{V}(\R^d)$ coincide algebraically and the inclusion mapping $\mathcal{B}^{\{M_p\}}_\mathcal{V}(\R^d) \rightarrow \widetilde{\mathcal{B}}^{\{M_p\}}_\mathcal{V}(\R^d)$ is continuous.
\end{lemma}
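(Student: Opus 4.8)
The plan is to prove the three assertions — the inclusion $\mathcal{B}^{\{M_p\}}_\mathcal{V}(\R^d)\subseteq\widetilde{\mathcal{B}}^{\{M_p\}}_\mathcal{V}(\R^d)$, the continuity of the inclusion mapping, and the reverse inclusion $\widetilde{\mathcal{B}}^{\{M_p\}}_\mathcal{V}(\R^d)\subseteq\mathcal{B}^{\{M_p\}}_\mathcal{V}(\R^d)$ — in that order, the first two being a routine consequence of one elementary estimate and the third being the crux, to be settled by a diagonal argument by contradiction in the spirit of the coincidence $\mathcal{V}C(X)=C\overline{V}(X)$ recalled above and of Komatsu's handling of the family $\mathfrak{R}$. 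For the easy part I would fix $n$, $r_j\in\mathfrak{R}$ and $v\in\overline{V}$; since $r_j\to\infty$ there is $D>0$, depending only on $n$ and $(r_j)$, with $n^p\leq D\prod_{j=0}^p r_j$ for all $p\in\N$, and since $v\in\overline{V}$ there is $A>0$ with $v\leq Av_n$ on $\R^d$. Multiplying these two bounds gives $\|\varphi\|_{\mathcal{D}^{M_p,r_j}_{L^\infty_v}}\leq AD\,\|\varphi\|_{\mathcal{D}^{M_p,1/n}_{L^\infty_{v_n}}}$ for every $\varphi\in C^\infty(\R^d)$, so $\mathcal{D}^{M_p,1/n}_{L^\infty_{v_n}}(\R^d)\subseteq\widetilde{\mathcal{B}}^{\{M_p\}}_\mathcal{V}(\R^d)$ for each $n$, hence $\mathcal{B}^{\{M_p\}}_\mathcal{V}(\R^d)\subseteq\widetilde{\mathcal{B}}^{\{M_p\}}_\mathcal{V}(\R^d)$; the same inequality shows every generating seminorm of $\widetilde{\mathcal{B}}^{\{M_p\}}_\mathcal{V}(\R^d)$ is continuous on each step space, so the universal property of the locally convex inductive limit yields continuity of the inclusion mapping.

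For the reverse inclusion I would argue by contradiction: assume $\varphi\in\widetilde{\mathcal{B}}^{\{M_p\}}_\mathcal{V}(\R^d)$ but $\varphi\notin\mathcal{B}^{\{M_p\}}_\mathcal{V}(\R^d)$. The main reduction is the claim that for all $n_0,L_0\in\N$,
\[
\sup\left\{\frac{|\partial^\alpha\varphi(x)|v_n(x)}{M_\alpha n^{|\alpha|}}\ :\ n\geq n_0,\ |\alpha|\geq L_0,\ x\in\R^d\right\}=\infty,
\]
i.e. the divergence forced by $\varphi\notin\mathcal{B}^{\{M_p\}}_\mathcal{V}(\R^d)$ can always be realized with both $n$ and the order $|\alpha|$ as large as we please. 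Indeed, were this supremum finite, then evaluating at $n=n_0$ would control all derivatives of order $\geq L_0$, while for each of the finitely many $\alpha$ with $|\alpha|<L_0$ one has $\partial^\alpha\varphi\in C\overline{V}(\R^d)=\mathcal{V}C(\R^d)$ (the membership because $\varphi\in\widetilde{\mathcal{B}}^{\{M_p\}}_\mathcal{V}(\R^d)$, the equality being the algebraic coincidence recalled above), hence $\sup_x|\partial^\alpha\varphi(x)|v_{m_\alpha}(x)<\infty$ for some $m_\alpha$; taking $m:=\max(n_0,\max_{|\alpha|<L_0}m_\alpha)$ and using that $(v_n)_n$ is decreasing, all these estimates combine into $\varphi\in\mathcal{D}^{M_p,1/m}_{L^\infty_{v_m}}(\R^d)\subseteq\mathcal{B}^{\{M_p\}}_\mathcal{V}(\R^d)$, a contradiction.

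Granted this reduction, I would build recursively strictly increasing integers $(n_k)_k$ and orders $(|\alpha_k|)_k$, together with points $x_k\in\R^d$, such that $|\partial^{\alpha_k}\varphi(x_k)|v_{n_k}(x_k)>k\,Q_{k-1}\,M_{\alpha_k}n_k^{|\alpha_k|}$, where $Q_{k-1}:=\prod_{j=0}^{|\alpha_{k-1}|}r_j$ is already frozen before step $k$ once we decree $r_j:=n_k$ for $|\alpha_{k-1}|<j\leq|\alpha_k|$ (with $r_0:=1$). The resulting $(r_j)_j$ is positive, increasing and unbounded, so $r_j\in\mathfrak{R}$, and since $r_j\leq n_k$ for $j\leq|\alpha_k|$ we get $\prod_{j=0}^{|\alpha_k|}r_j=Q_{k-1}n_k^{|\alpha_k|-|\alpha_{k-1}|}\leq Q_{k-1}n_k^{|\alpha_k|}$, whence $|\partial^{\alpha_k}\varphi(x_k)|v_{n_k}(x_k)/(M_{\alpha_k}\prod_{j=0}^{|\alpha_k|}r_j)>k$. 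Finally I would replace $v_{n_k}$ by a single weight $v:=\inf_m c_m v_m$ with $c_m:=1+\tfrac12\max\{v_{n_k}(x_k)/v_m(x_k):n_k<m\}$, a finite maximum since $n_k\to\infty$; this $v$ is non-negative, upper semicontinuous and satisfies $v\leq c_pv_p$ for all $p$, so $v\in\overline{V}$, and one checks $v(x_k)\geq\tfrac12 v_{n_k}(x_k)$ for every $k$. Then $\|\varphi\|_{\mathcal{D}^{M_p,r_j}_{L^\infty_v}}\geq\sup_k |\partial^{\alpha_k}\varphi(x_k)|v(x_k)/(M_{\alpha_k}\prod_{j=0}^{|\alpha_k|}r_j)=\infty$, contradicting $\varphi\in\widetilde{\mathcal{B}}^{\{M_p\}}_\mathcal{V}(\R^d)$ and finishing the proof.

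I expect the main obstacle to be precisely the coordination in this last step: one must simultaneously be able to drive the order $|\alpha_k|$ to infinity — which is exactly what the reduction, and through it the identity $\mathcal{V}C(\R^d)=C\overline{V}(\R^d)$, provides — keep the product $\prod_{j=0}^{|\alpha_k|}r_j$ within the factor $Q_{k-1}$ (frozen before step $k$) of $n_k^{|\alpha_k|}$, and still produce a legitimate element $v\in\overline{V}$ that does not cancel the divergence. Once these three choices are made compatibly, the rest is bookkeeping.
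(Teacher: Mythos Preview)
Your argument is correct. The easy direction and its continuity are handled exactly as in the paper (which simply calls this ``obvious''), and your diagonal construction for the reverse inclusion goes through: the reduction step is sound because each $\partial^\alpha\varphi\in C\overline{V}(\R^d)=\mathcal{V}C(\R^d)$, the recursive choice of $(n_k,\alpha_k,x_k)$ legitimately produces an element $r_j\in\mathfrak{R}$ with $\prod_{j=0}^{|\alpha_k|}r_j\leq Q_{k-1}n_k^{|\alpha_k|}$, and the weight $v=\inf_m c_mv_m$ is upper semicontinuous, lies in $\overline V$, and satisfies $v(x_k)\geq\tfrac12 v_{n_k}(x_k)$, so the contradiction follows.

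The paper's own proof, however, is considerably shorter and more conceptual. It introduces the auxiliary decreasing weight system $\mathcal{W}=(w_n)_{n\in\N}$ on the discrete space $\N^d$ given by $w_n(\alpha)=n^{-|\alpha|}$, and observes that the map $\varphi\mapsto f$, $f(x,\alpha)=\partial^\alpha\varphi(x)/M_\alpha$, identifies membership in $\mathcal{B}^{\{M_p\}}_{\mathcal V}(\R^d)$ with membership of $f$ in $(\mathcal V\otimes\mathcal W)C(\R^d\times\N^d)$, and membership in $\widetilde{\mathcal B}^{\{M_p\}}_{\mathcal V}(\R^d)$ with membership of $f$ in $C\overline V(\mathcal V\otimes\mathcal W)(\R^d\times\N^d)$ (via Remark~\ref{remark-tensor-weights-2} and Komatsu's lemma that $\overline V(\mathcal W)$ is parametrized by $\mathfrak R$). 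The reverse inclusion then follows in one line from the general algebraic identity $\mathcal UC(X)=C\overline V(\mathcal U)(X)$ applied on the product space $X=\R^d\times\N^d$. In effect, you invoke that identity only on $\R^d$ and then reproduce by hand, via your diagonal construction, what the paper obtains by applying it once on $\R^d\times\N^d$; your approach is more self-contained but the paper's encoding avoids the entire recursive construction.
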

\begin{proof}
It is obvious that $\mathcal{B}^{\{M_p\}}_\mathcal{V}(\R^d)$ is continuously included in $\widetilde{\mathcal{B}}^{\{M_p\}}_\mathcal{V}(\R^d)$. For the converse inclusion, we consider the  decreasing weight system $\mathcal{W} = (w_n)_{n \in \N}$ on $\N^d$ (endowed with the discrete topology), given by $w_n(\alpha) := n^{-|\alpha|}$, $\alpha \in \N^d$. Now let $\varphi \in \widetilde{\mathcal{B}}^{\{M_p\}}_\mathcal{V}(\R^d)$ be arbitrary and define $f(x,\alpha) =\partial^\alpha \varphi(x)/M_\alpha$ for $x \in \R^d$, $\alpha \in \N^d$. By Remark \ref{remark-tensor-weights-2} and  \cite[Lemma 3.4$(ii)$]{Komatsu3} we have that $f \in C\overline{V}(\mathcal{V} \otimes \mathcal{W})(\R^d \times \N^d)$. Since $ \mathcal{V} \otimes \mathcal{W}C(\R^d \times \N^d) =  C\overline{V}(\mathcal{V} \otimes \mathcal{W})(\R^d \times \N^d)$ as sets (cf.\ Section \ref{sect-weighted-ind-limits}), we obtain that $f \in  \mathcal{V} \otimes \mathcal{W}C(\R^d \times \N^d)$, which precisely means that $\varphi \in \mathcal{B}^{\{M_p\}}_\mathcal{V}(\R^d)$.
\end{proof}

The rest of this article is devoted to showing that, under mild conditions on $M_p$ and $\mathcal{V}$, the equality $\mathcal{B}^{\{M_p\}}_\mathcal{V}(\R^d) = \widetilde{\mathcal{B}}^{\{M_p\}}_\mathcal{V}(\R^d)$ also holds topologically. 

We will make use of the following two standard conditions for weight sequences: $(M.1)$ $M_p^2 \leq M_{p-1}M_{p+1}$, $p \geq 1$, and  $(M.2)'$ $M_{p+1} \leq C_0H^{p}M_p$, $p\in \N$, for some $C_0,H \geq 1$. We also need the \emph{associated function}  of the sequence $M_p$ in our considerations, which is given by
$$
M(t):=\sup_{p\in\N}\log\frac{t^pM_0}{M_p},\qquad t > 0,
$$
and $M(0):=0$. We define $M$ on $\R^d$ as the radial function $M(x) = M(|x|)$, $x \in \R^d$. The assumption $(M.2)'$ implies that $M(H^kt) -M(t) \geq k\log(t/C_0)$, $t,k \geq 0$ \cite[Prop.\ 3.4]{Komatsu}. In particular, we have that $e^{M(t)-M(H^{d+1}t)} \leq (2C_0)^{d+1}(1+t^{d+1})^{-1}$, $t \geq 0$. Given $r_j \in \mathfrak{R}$, we denote by $M_{r_j}$ the associated function of the weight sequence $M_p \prod^p_{j=0}r_j$. 

As mentioned in the introduction, our arguments will rely on the mapping properties of the short-time Fourier transform, which we now introduce. The translation and modulation operators are denoted by $T_xf = f(\:\cdot\: - x)$ and $M_\xi f = e^{2\pi i \xi \cdot} f$, for $x, \xi \in \R^d$. The \emph{short-time Fourier transform (STFT)} of a function $f \in L^2(\R^d)$ with respect to a window function $\psi \in L^2(\R^d)$ is defined as
$$
V_\psi f(x,\xi) := (f, M_\xi T_x\psi)_{L^2} = \int_{\R^d} f(t) \overline{\psi(t-x)}e^{-2\pi i \xi t} \dt, \qquad (x, \xi) \in \R^{2d}.
$$
We have that $\|V_\psi f\|_{L^2(\R^{2d})} = \|\psi\|_{L^2}\|f\|_{L^2}$. In particular, the mapping $V_\psi : L^2(\R^d) \rightarrow L^2(\R^{2d})$ is continuous. The adjoint of $V_\psi$ is given by the weak integral
$$
V^\ast_\psi F = \int \int_{\R^{2d}} F(x,\xi) M_\xi T_x\psi \dx \dxi, \qquad F \in L^2(\R^{2d}).
$$
If $\psi \neq 0$ and $\gamma \in L^2(\R^d)$ is a synthesis window for $\psi$, that is $(\gamma, \psi)_{L^2} \neq 0$, then
\begin{equation}
\frac{1}{(\gamma, \psi)_{L^2}} V^\ast_\gamma \circ V_\psi = \operatorname{id}_{L^2(\R^d)}.
\label{reconstruction-L2}
\end{equation}

We are interested in the STFT on the spaces $\mathcal{B}^{\{M_p\}}_\mathcal{V}(\R^d)$ and $\widetilde{\mathcal{B}}^{\{M_p\}}_\mathcal{V}(\R^d)$. This requires to impose some further conditions on the weight system $\mathcal{V}$. Let $A_p$ be a weight sequence with associated function $A$. A decreasing weight system $\mathcal{V} = (v_n)_{n  \in \N}$ is said to be \emph{$A_p$-admissible} if there is $\tau > 0$ such that for every $n \in \N$ there are $m \geq n$ and $C > 0$ such that $v_m(x+y) \leq Cv_n(x)e^{A(\tau y)}$, $x,y \in \R^d$. We start with two lemmas. As customary \cite{P-P-V}, given two weight sequences $M_p$ and $A_p$, we denote by $\mathcal{S}^{(M_p)}_{(A_p)}(\R^d)$ the Gelfand-Shilov space of  Beurling type. For the weight function $v = e^{A(\tau \, \cdot \,)}$, $\tau > 0$, we use the alternative notation $\|\:\cdot \:\|_{\mathcal{S}^{M_p,h}_{A_p,\tau}}=\|\:\cdot \:\|_{\mathcal{D}^{M_p,h}_{L^{\infty}_{v}}}$ so that the Fr\'{e}chet space structure of $\mathcal{S}^{(M_p)}_{(A_p)}(\R^d)$ is determined by the family of norms $\{ \|\:\cdot \:\|_{\mathcal{S}^{M_p,h}_{A_p,\tau}} \, : \, h > 0, \tau > 0\}$.
\begin{lemma}\label{STFT-test}
Let $M_p$ and $A_p$ be weight sequences satisfying $(M.1)$ and $(M.2)'$, let $w$ and $v$ be non-negative measurable functions on $\R^d$ such that
\begin{equation}
v(x+y) \leq Cw(x)e^{A(\tau y)}, \qquad x,y \in \R^d,
\label{v-w-inequality}
\end{equation}
for some $C,\tau > 0$, and let $\psi \in \mathcal{S}^{(M_p)}_{(A_p)}(\R^d)$. Then, the mapping
$
V_\psi: \mathcal{D}^{M_p,h}_{L^\infty_w}(\R^d) \rightarrow Cv\otimes e^{M(\pi h \, \cdot \, /\sqrt{d})}(\R^{2d}_{x,\xi})
$
is well-defined and continuous.
\end{lemma}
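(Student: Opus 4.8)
The plan is to prove the single estimate
$$\| V_\psi \varphi \|_{Cv \otimes e^{M(\pi h\,\cdot\,/\sqrt d)}} \leq C\, \| \varphi \|_{\mathcal{D}^{M_p,h}_{L^\infty_w}}, \qquad \varphi \in \mathcal{D}^{M_p,h}_{L^\infty_w}(\R^d),$$
with $C$ depending only on $\psi$, $h$, $\tau$, the constant in \eqref{v-w-inequality}, $d$ and the weight sequences, by a direct computation with the integral defining $V_\psi \varphi$. One first records that $V_\psi\varphi$ is a well-defined element of $C(\R^{2d})$: since $\psi \in \mathcal{S}^{(M_p)}_{(A_p)}(\R^d)$ decays faster than $e^{-A(\sigma\,\cdot\,)}$ for \emph{every} $\sigma > 0$, while \eqref{v-w-inequality} forces $1/w$ to grow at most of order $e^{A(\tau'\,\cdot\,)}$ for some $\tau' > 0$ (we may assume $v \not\equiv 0$, otherwise the statement is trivial), the defining integral converges absolutely and locally uniformly in $(x,\xi)$, and continuity follows by dominated convergence. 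The same growth/decay comparison shows that, for each fixed $x$, the function $t \mapsto \varphi(t)\overline{\psi(t-x)}$ together with all its $t$-derivatives is rapidly decreasing, so the integrations by parts performed below produce no boundary terms; this is where $(M.2)'$ for $A_p$ enters.

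For the estimate, fix $\varphi$, write $\|\varphi\| := \|\varphi\|_{\mathcal{D}^{M_p,h}_{L^\infty_w}}$, and let $\alpha \in \N^d$. Integrating by parts $|\alpha|$ times in $t$,
$$\xi^\alpha V_\psi\varphi(x,\xi) = \frac{1}{(2\pi i)^{|\alpha|}} \int_{\R^d} \partial^\alpha_t\big[ \varphi(t)\overline{\psi(t-x)} \big]\, e^{-2\pi i \xi \cdot t} \dt.$$
Expand with the Leibniz rule and insert $|\partial^\beta\varphi(t)| \leq \|\varphi\|\, M_\beta/(h^{|\beta|}w(t))$ together with $|\partial^\gamma\psi(s)| \leq C_\sigma M_\gamma/(h^{|\gamma|} e^{A(\sigma s)})$; the latter holds for every $\sigma > 0$, with $C_\sigma = \|\psi\|_{\mathcal{S}^{M_p,h}_{A_p,\sigma}}$, because $\psi$ lies in the Beurling space, and here we deliberately read it off at the smoothness parameter $h$. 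Bounding $\sum_{\beta \leq \alpha}\binom{\alpha}{\beta} M_\beta M_{\alpha - \beta} \le M_\alpha \sum_{\beta \le \alpha}\binom{\alpha}{\beta} = 2^{|\alpha|}M_\alpha$ via $M_p M_q \leq M_{p+q}$ (a consequence of $(M.1)$; normalize $M_0 = 1$), replacing $v(x)/w(t)$ by $Ce^{A(\tau(x-t))}$ through \eqref{v-w-inequality}, and choosing $\sigma = H_A^{d+1}\tau$ with $H_A, C_{0,A}$ the constants from $(M.2)'$ for $A_p$, the inequality $e^{A(t)-A(H_A^{d+1}t)} \leq (2C_{0,A})^{d+1}(1+t^{d+1})^{-1}$ (the $A_p$-version of the bound recorded before the lemma) shows that $e^{A(\tau(x-t))-A(\sigma(t-x))}$ is dominated by an integrable function of $t-x$ whose integral $D < \infty$ is independent of $x$. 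Putting the pieces together,
$$|\xi^\alpha V_\psi\varphi(x,\xi)|\, v(x) \leq \frac{1}{(2\pi)^{|\alpha|}} \int_{\R^d} \big| \partial^\alpha_t[\varphi(t)\overline{\psi(t-x)}] \big|\, v(x)\, \dt \leq C_\sigma C D\, \|\varphi\|\, \frac{M_\alpha}{(\pi h)^{|\alpha|}}$$
for all $(x,\xi) \in \R^{2d}$ and $\alpha \in \N^d$, the factor $2^{|\alpha|}$ from the Leibniz sum turning $(2\pi h)^{-|\alpha|}$ into $(\pi h)^{-|\alpha|}$.

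It remains to convert these polynomial-in-$\xi$ estimates into the target weight $e^{M(\pi h\,\cdot\,/\sqrt d)}$. Given $(x,\xi)$, pick a coordinate $i_0$ with $|\xi_{i_0}| = \max_i |\xi_i| \geq |\xi|/\sqrt d$ and apply the last display with $\alpha = p\, e_{i_0}$ ($p$-th power concentrated in the $i_0$-th slot), $p \in \N$; this yields
$$|V_\psi\varphi(x,\xi)|\, v(x) \leq C_\sigma C D\, \|\varphi\|\, M_p\, \big( \pi h |\xi|/\sqrt d \big)^{-p}, \qquad p \in \N.$$
Taking the infimum over $p$ and recalling $e^{-M(t)} = \inf_{p\in\N} M_p t^{-p}$ (with $M_0 = 1$), we obtain $|V_\psi\varphi(x,\xi)|\, v(x)\, e^{M(\pi h \xi/\sqrt d)} \leq C_\sigma C D\, \|\varphi\|$ uniformly in $(x,\xi) \in \R^{2d}$, which is the asserted estimate; in particular $V_\psi\varphi \in Cv \otimes e^{M(\pi h\,\cdot\,/\sqrt d)}(\R^{2d})$ and $V_\psi$ is continuous.

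The computation is routine once the bookkeeping is organised; the one genuinely delicate point is matching the \emph{exact} constant $\pi h/\sqrt d$ in the target weight. This is what dictates the choice of the smoothness parameter $h$ for $\psi$ in the Leibniz estimate (so that the factor $2^{|\alpha|}$ cancels the $(2\pi)^{-|\alpha|}$ from integration by parts down to precisely $(\pi h)^{-|\alpha|}$) and also accounts for the $\sqrt d$ loss incurred when one concentrates the multi-index $\alpha$ on a single coordinate to pass from $\xi^\alpha$-bounds to the associated function $M$. The secondary, purely technical, obstacle — justifying the integration by parts and the absolute convergence of the STFT integral — rests entirely on comparing the super-exponential decay of $\psi$ against the $A$-controlled growth of $1/w$ coming from \eqref{v-w-inequality}, and it is here, rather than in the main estimate, that the Beurling (as opposed to Roumieu) character of $\psi$ is used, since we need the window norm $\|\psi\|_{\mathcal{S}^{M_p,h}_{A_p,\sigma}}$ to be finite for the \emph{given} $h$ and for $\sigma$ as large as $H_A^{d+1}\tau$.
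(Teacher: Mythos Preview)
Your proof is correct and follows essentially the same approach as the paper: integrate by parts to bound $|\xi^\alpha V_\psi\varphi(x,\xi)|v(x)$, apply Leibniz together with the weight inequality \eqref{v-w-inequality} and the Beurling bounds on $\psi$ to obtain the estimate $C'\|\varphi\|\,M_\alpha/(\pi h)^{|\alpha|}$, and then concentrate $\alpha$ on the largest coordinate of $\xi$ to convert this into the associated-function weight $e^{-M(\pi h\xi/\sqrt d)}$. The paper's proof is simply a terser version of yours, leaving implicit the well-definedness of the integral, the absence of boundary terms, and the choice of $\sigma$ for the window, all of which you spell out.
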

\begin{proof}
Let $\varphi \in \mathcal{D}^{M_p,h}_{L^\infty_w}(\R^d)$ be arbitrary. For all $\alpha \in \N^d$ and $(x,\xi) \in \R^{2d}$,
\begin{align*}
&|\xi^{\alpha}V_{\psi}\varphi(x, \xi)|v(x) \\
&\leq C(2\pi)^{-|\alpha|} \sum_{\beta \leq \alpha} \binom{\alpha}{\beta} \int_{\R^d} |\partial^\beta\varphi(t)|w(t) |\partial^{\alpha-\beta}\psi(t-x)|e^{A(\tau(t-x))} \dt \\
&\leq C' \|\varphi\|_{\mathcal{D}^{M_p,h}_{L^\infty_w}} (\pi h)^{-|\alpha|} M_\alpha.
\end{align*}
Hence
$$
|V_{\psi}\varphi(x,\xi)|v(x) \leq C' \|\varphi\|_{\mathcal{D}^{M_p,h}_{L^\infty_w}} \inf_{p \in \N} \frac{M_p}{(\pi h|\xi|/\sqrt{d})^{p}}  =C'M_0  \|\varphi\|_{\mathcal{D}^{M_p,h}_{L^\infty_w}} e^{-M(\pi h\xi/\sqrt{d})}.
$$
\end{proof}
\begin{lemma}\label{double-int-test}
Let $M_p$ and $A_p$ be weight sequences satisfying $(M.1)$ and $(M.2)'$, let $w$ and $v$ be non-negative measurable functions on $\R^d$ satisfying \eqref{v-w-inequality}, and let $\psi \in \mathcal{S}^{(M_p)}_{(A_p)}(\R^d)$. Then, the mapping
$V^\ast_\psi: Cw\otimes e^{M(h \, \cdot \,)}(\R^{2d}_{x,\xi}) \rightarrow \mathcal{D}^{M_p,h/(4H^{d+1}\pi)}_{L^\infty_v}(\R^d)$ is well-defined and continuous.
\end{lemma}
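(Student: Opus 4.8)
The plan is to estimate $\partial^\alpha(V^\ast_\psi F)(t)$ directly by differentiating under the (weak) integral sign. Writing
$$
V^\ast_\psi F(t) = \int\int_{\R^{2d}} F(x,\xi) e^{2\pi i \xi t}\psi(t-x)\dx\dxi,
$$
we get $\partial^\alpha(V^\ast_\psi F)(t) = \sum_{\beta \le \alpha}\binom{\alpha}{\beta}(2\pi i)^{|\alpha - \beta|}\int\int_{\R^{2d}} F(x,\xi)\xi^{\alpha - \beta} e^{2\pi i \xi t}\partial^\beta\psi(t-x)\dx\dxi$. First I would bound $|F(x,\xi)| \le \|F\|_{Cw\otimes e^{M(h\,\cdot\,)}} w(x)^{-1}e^{-M(h\xi)}$, use \eqref{v-w-inequality} in the form $w(x)^{-1} \le C v(t)^{-1} e^{A(\tau(t-x))}$, and control $|\partial^\beta\psi(t-x)|e^{A(\tau(t-x))}$ by $\|\psi\|_{\mathcal{S}^{M_p,h'}_{A_p,\tau}} (h')^{-|\beta|}M_\beta$ for a suitably small $h'$, which is finite because $\psi \in \mathcal{S}^{(M_p)}_{(A_p)}(\R^d)$. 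This leaves the $\xi$-integral $\int_{\R^d} |\xi|^{|\alpha-\beta|} e^{-M(h\xi)}\dxi$ to be absorbed.

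The key point is the factor-of-$4H^{d+1}\pi$ loss in the target norm, and it comes entirely from handling that $\xi$-integral together with the binomial/monomial bookkeeping. Using the standard estimate $e^{M(H^{d+1}t) - M(t)} \ge (2C_0)^{-(d+1)}(1+t^{d+1})$ recalled in the excerpt (applied with $t = h|\xi|/H^{d+1}$), I would split $e^{-M(h\xi)} \le (2C_0)^{d+1}(1 + (h|\xi|/H^{d+1})^{d+1})^{-1} e^{-M(h\xi/H^{d+1})}$, so that $(1 + (h|\xi|/H^{d+1})^{d+1})^{-1}$ makes the integral over $\R^d$ converge while $e^{-M(h\xi/H^{d+1})} \le M_p/(h|\xi|/H^{d+1})^p$ supplies, after choosing $p = |\alpha - \beta|$, the bound $|\xi|^{|\alpha-\beta|}e^{-M(h\xi/H^{d+1})} \le M_{|\alpha-\beta|}(H^{d+1}/h)^{|\alpha-\beta|}$. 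Pulling everything together with $\binom{\alpha}{\beta} \le 2^{|\alpha|}$, $M_\beta M_{|\alpha-\beta|} \le M_{|\alpha|}$ (a consequence of $(M.1)$; more precisely logarithmic convexity gives $M_\beta M_{\alpha-\beta}\le M_\alpha M_0$, so up to the harmless constant $M_0$), and summing the $\le (d+1)$-fold... rather, the at most $(|\alpha|+1)^d \le 2^{|\alpha|}$-many terms $\beta \le \alpha$ — wait, more cleanly $\sum_{\beta\le\alpha}\binom{\alpha}{\beta} = 2^{|\alpha|}$ already counts them with the right weights — I collect all the $|\alpha|$-dependent constants into a single geometric factor $(4H^{d+1}\pi)^{|\alpha|}$: the $2\pi$ from modulation differentiation, the $2^{|\alpha|}$ from the binomial sum, and the $H^{d+1}/h$ from the $\xi$-integral. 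This yields
$$
\sup_\alpha \sup_t \frac{(h/(4H^{d+1}\pi))^{|\alpha|}|\partial^\alpha(V^\ast_\psi F)(t)|v(t)}{M_{|\alpha|}} \le C''\|F\|_{Cw\otimes e^{M(h\,\cdot\,)}},
$$
which is the asserted continuity (well-definedness of the weak integral in $C^\infty$ follows from the same absolute convergence estimates, justifying differentiation under the integral).

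The main obstacle is purely bookkeeping: one must be careful that the monomial estimate $|\xi^{\alpha-\beta}| \le |\xi|^{|\alpha-\beta|}$, the choice $p = |\alpha-\beta|$ in the associated-function bound, and the $H^{d+1}$-shift trick interact so as to produce exactly the constant $4H^{d+1}\pi$ and no worse, while keeping the $\xi$-integral convergent uniformly. There is also a minor technical point — justifying that $V^\ast_\psi F$ is a genuine $C^\infty$ function and that differentiation commutes with the weak integral — which is handled by dominated convergence using precisely the bounds above (the integrand is dominated, locally uniformly in $t$, by an integrable function independent of $t$). I do not anticipate any conceptual difficulty beyond this; everything reduces to the growth properties of $M(t)$ guaranteed by $(M.2)'$ and the logarithmic convexity $(M.1)$.
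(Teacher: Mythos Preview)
Your overall strategy matches the paper's, and your handling of the $\xi$-integral is correct, but there is a genuine gap: the $x$-integral diverges with your bounds. After you estimate $|\partial^\beta\psi(t-x)|e^{A(\tau(t-x))}$ by $\|\psi\|_{\mathcal{S}^{M_p,h'}_{A_p,\tau}}(h')^{-|\beta|}M_\beta$, the resulting majorant has no decay whatsoever in $x$; concretely your chain gives
\[
|F(x,\xi)\,\xi^{\alpha-\beta}\,\partial^\beta\psi(t-x)|\,v(t)\ \leq\ C'\|F\|\,(h')^{-|\beta|}M_\beta\,|\xi|^{|\alpha-\beta|}e^{-M(h\xi)},
\]
which is constant in $x$, so ``this leaves the $\xi$-integral'' is false --- what is left is $\int_{\R^d}\dx=\infty$. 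The same defect invalidates your dominated-convergence justification for differentiating under the integral.

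The fix is to use that $\psi$ is of \emph{Beurling} type: bound $|\partial^\beta\psi|$ with the stronger norm $\|\psi\|_{\mathcal{S}^{M_p,h'}_{A_p,H^{d+1}\tau}}$ (still finite, since $\psi\in\mathcal{S}^{(M_p)}_{(A_p)}(\R^d)$). Then, after extracting the factor $e^{A(\tau(t-x))}$ needed for \eqref{v-w-inequality}, a residual factor $e^{A(\tau(t-x))-A(H^{d+1}\tau(t-x))}\leq (2C_0)^{d+1}(1+(\tau|t-x|)^{d+1})^{-1}$ survives and makes the $x$-integral converge uniformly in $t$. This is exactly what the paper does: with $k=h/(2H^{d+1}\pi)$ it uses $\|\psi\|_{\mathcal{S}^{M_p,k}_{A_p,H^{d+1}\tau}}$ and arrives at the separated bound
\[
\leq C''\|F\|\,\frac{M_\alpha}{(k/2)^{|\alpha|}}\int\!\!\int_{\R^{2d}} e^{M(2\pi k\xi)-M(h\xi)}\,e^{A(\tau x)-A(H^{d+1}\tau x)}\,\dx\dxi,
\]
both single integrals being finite by the $(M.2)'$ estimate. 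With this correction (and the choice $h'=k$), your constant-tracking for $4H^{d+1}\pi$ goes through unchanged.
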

\begin{proof}
Let $F \in Cw\otimes e^{M(h \,\cdot \,)}(\R^{2d}_{x,\xi})$ be arbitrary and set $k = h/(2H^{d+1}\pi)$. For each $\alpha \in \N^d$, (we write $\|\:\cdot \:\|_{Cw\otimes e^{M(h \,\cdot \,)}} =\|\:\cdot \:\|$)
\begin{align*}
&\sup_{t \in \R^d} |\partial^\alpha V^\ast_\psi F(t)|v(t) \\
&\leq C\sum_{\beta \leq \alpha} \binom{\alpha}{\beta}  \sup_{t \in \R^d}\int \int_{\R^{2d}} |F(x,\xi)|w(x) (2\pi|\xi|)^{|\beta|}|\partial^{\alpha-\beta}\psi(t-x)|e^{A(\tau(t-x))} \dx \dxi \\
&\leq CM_0^{-1}\|\psi\|_{\mathcal{S}^{M_p,k}_{A_p,H^{d+1}\tau}}\|F\|\frac{M_\alpha}{(k/2)^{|\alpha|}} \int \int_{\R^{2d}}e^{M(2\pi k \xi)-M(h\xi)} e^{A(\tau x)-A(H^{d+1}\tau x)} \dx \dxi \\
&\leq C' \|F\| \frac{M_\alpha}{(h/(4H^{d+1}\pi))^{|\alpha|}}.
\end{align*}
\end{proof}

Lemmas \ref{STFT-test} and \ref{double-int-test} yield the following corollary.
\begin{corollary}\label{STFT-GSb}
Let $M_p$ and $A_p$ be weight sequences satisfying $(M.1)$ and $(M.2)'$ and denote by $X$ the Fr\'echet space  consisting of all  $F \in C(\R^{2d})$ such that 
$$
\sup_{(x,\xi) \in \R^{2d}}|F(x,\xi)|e^{A(nx) + M(n\xi)} < \infty
$$
for all $n \in \N$. Let $\psi \in \mathcal{S}^{(M_p)}_{(A_p)}(\R^d)$. Then, the mappings
$V_\psi: \mathcal{S}^{(M_p)}_{(A_p)}(\R^d) \rightarrow X$ and  $V^\ast_\psi: X \rightarrow \mathcal{S}^{(M_p)}_{(A_p)}(\R^d) 
$ are well-defined and continuous.
\end{corollary}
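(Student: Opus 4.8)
The plan is to read off the corollary from Lemmas \ref{STFT-test} and \ref{double-int-test} by choosing the weights $v$ and $w$ among the exponential weights $e^{A(\tau\,\cdot\,)}$, $\tau > 0$. Both lemmas are applicable throughout, since $M_p$ and $A_p$ satisfy $(M.1)$ and $(M.2)'$ and $\psi \in \mathcal{S}^{(M_p)}_{(A_p)}(\R^d)$. Recall that, by definition, $\mathcal{S}^{(M_p)}_{(A_p)}(\R^d)$ carries the locally convex topology generated by the norms $\|\,\cdot\,\|_{\mathcal{D}^{M_p,h}_{L^\infty_{e^{A(\tau\,\cdot\,)}}}}$, $h, \tau > 0$, while $X = \varprojlim_{n \in \N} Ce^{A(n\,\cdot\,)} \otimes e^{M(n\,\cdot\,)}(\R^{2d}_{x,\xi})$ is the projective limit of these Banach spaces. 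The only property of associated functions we shall invoke is the elementary estimate $A(s+t) \leq A(2s) + A(2t)$ for $s, t \geq 0$, which is immediate from the monotonicity of $A$ and $A \geq 0$ (write $A(s+t) \leq A(2\max\{s,t\}) \leq A(2s) + A(2t)$); it implies $e^{A(n(x+y))} \leq e^{A(2nx)} e^{A(2ny)}$ for all $x, y \in \R^d$ and $n > 0$, i.e.\ condition \eqref{v-w-inequality} holds for $v = e^{A(n\,\cdot\,)}$, $w = e^{A(2n\,\cdot\,)}$ with $C = 1$ and $\tau = 2n$.

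To handle $V_\psi$, fix $n \in \N$ and set $h_n := n\sqrt{d}/\pi$, so that $e^{M(\pi h_n\,\cdot\,/\sqrt{d})} = e^{M(n\,\cdot\,)}$. Lemma \ref{STFT-test}, applied with $w = e^{A(2n\,\cdot\,)}$, $v = e^{A(n\,\cdot\,)}$ and $h = h_n$, shows that $V_\psi : \mathcal{D}^{M_p,h_n}_{L^\infty_{e^{A(2n\,\cdot\,)}}}(\R^d) \to Ce^{A(n\,\cdot\,)} \otimes e^{M(n\,\cdot\,)}(\R^{2d})$ is well-defined and continuous. Composing with the continuous inclusion $\mathcal{S}^{(M_p)}_{(A_p)}(\R^d) \hookrightarrow \mathcal{D}^{M_p,h_n}_{L^\infty_{e^{A(2n\,\cdot\,)}}}(\R^d)$ and letting $n$ vary over $\N$, we obtain the continuity of $V_\psi : \mathcal{S}^{(M_p)}_{(A_p)}(\R^d) \to X$.

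To handle $V^\ast_\psi$, fix $h, \tau > 0$, put $h' := 4H^{d+1}\pi h$ and choose an integer $n \geq \max\{2\tau, h'\}$. Lemma \ref{double-int-test}, applied with $w = e^{A(2\tau\,\cdot\,)}$, $v = e^{A(\tau\,\cdot\,)}$ and with its parameter ``$h$'' taken to be $h'$, shows that $V^\ast_\psi : Ce^{A(2\tau\,\cdot\,)} \otimes e^{M(h'\,\cdot\,)}(\R^{2d}) \to \mathcal{D}^{M_p,h}_{L^\infty_{e^{A(\tau\,\cdot\,)}}}(\R^d)$ is well-defined and continuous. Since $n \geq 2\tau$ and $n \geq h'$, the weight $e^{A(n\,\cdot\,)} \otimes e^{M(n\,\cdot\,)}$ dominates $e^{A(2\tau\,\cdot\,)} \otimes e^{M(h'\,\cdot\,)}$, so $X$ is continuously included in $Ce^{A(2\tau\,\cdot\,)} \otimes e^{M(h'\,\cdot\,)}(\R^{2d})$; composing and letting $h$ and $\tau$ range over the positive reals gives the continuity of $V^\ast_\psi : X \to \mathcal{S}^{(M_p)}_{(A_p)}(\R^d)$.

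No genuine difficulty is expected: the argument is entirely a bookkeeping of parameters built on the two lemmas. The only point needing mild attention is that Lemma \ref{double-int-test} produces a loss of the constant factor $4H^{d+1}\pi$ in the derivative weight (and, correspondingly, that one must take $h_n$ sufficiently large when applying Lemma \ref{STFT-test}); this is harmless precisely because both $\mathcal{S}^{(M_p)}_{(A_p)}(\R^d)$ and $X$ are equipped with projective-limit topologies over all admissible parameters, so any fixed multiplicative loss is absorbed.
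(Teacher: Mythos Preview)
Your proof is correct and follows exactly the approach the paper intends: the corollary is stated immediately after Lemmas \ref{STFT-test} and \ref{double-int-test} with the one-line justification that it ``follows'' from them, and your argument is precisely the parameter bookkeeping needed to make that deduction explicit. Nothing further is required.
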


We are now able to establish the mapping properties of the STFT on $\mathcal{B}^{\{M_p\}}_\mathcal{V}(\R^d)$. Given a weight sequence $M_p$ with associated function $M$, we define $\mathcal{V}_{\{M_p\}} := (e^{M(\,\cdot\,/n)})_{n \in \N}$, a decreasing weight system on $\R^d$.

\begin{proposition}\label{STFT-gg} Let $M_p$ and $A_p$ be weight sequences satisfying $(M.1)$ and $(M.2)'$, let $\mathcal{V} = (v_n)_{n \in \N}$ be an $A_p$-admissible decreasing weight system, and let $\psi \in \mathcal{S}^{(M_p)}_{(A_p)}(\R^d)$. Then, the following mappings are continuous: 
$$
V_\psi: \mathcal{B}^{\{M_p\}}_\mathcal{V}(\R^d) \rightarrow \mathcal{V}\otimes\mathcal{V}_{\{M_p\}}C(\R^{2d}_{x,\xi}) 
$$
and
$$ 
V^\ast_\psi: \mathcal{V}\otimes\mathcal{V}_{\{M_p\}}C(\R^{2d}_{x,\xi}) \rightarrow  \mathcal{B}^{\{M_p\}}_\mathcal{V}(\R^d).
$$
Assume that $\mathcal{S}^{(M_p)}_{(A_p)}(\R^d) \neq \{0\}$. If $\psi \neq 0$ and $\gamma \in \mathcal{S}^{(M_p)}_{(A_p)}(\R^d)$ is a synthesis window for $\psi$, the following reconstruction formula holds
\begin{equation}
\frac{1}{(\gamma, \psi)_{L^2}} V^\ast_\gamma \circ V_\psi = \operatorname{id}_{ \mathcal{B}^{\{M_p\}}_\mathcal{V}(\R^d)}.
\label{reconstruction-B}
\end{equation}
\end{proposition}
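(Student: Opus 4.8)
The plan is to obtain the continuity of $V_\psi$ and $V^\ast_\psi$ directly from Lemmas \ref{STFT-test} and \ref{double-int-test} together with the definition of $A_p$-admissibility, and then to establish \eqref{reconstruction-B} by desingularizing the inversion integral.

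Since $\mathcal{B}^{\{M_p\}}_\mathcal{V}(\R^d) = \varinjlim_n \mathcal{D}^{M_p,1/n}_{L^\infty_{v_n}}(\R^d)$, the continuity of $V_\psi$ amounts to showing that for every $n$ there is an index $m$ for which $V_\psi$ maps $\mathcal{D}^{M_p,1/n}_{L^\infty_{v_n}}(\R^d)$ continuously into the Banach step $Cv_m \otimes e^{M(\,\cdot\,/m)}(\R^{2d}_{x,\xi})$. By $A_p$-admissibility there is $\tau > 0$, and to the given $n$ there correspond $m_0 \geq n$ and $C > 0$ with $v_{m_0}(x+y) \leq C v_n(x) e^{A(\tau y)}$, $x,y \in \R^d$. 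Lemma \ref{STFT-test}, applied with $w = v_n$, $v = v_{m_0}$ and $h = 1/n$, gives continuity of $V_\psi : \mathcal{D}^{M_p,1/n}_{L^\infty_{v_n}}(\R^d) \to Cv_{m_0} \otimes e^{M(\pi\,\cdot\,/(n\sqrt{d}))}(\R^{2d}_{x,\xi})$, and for any $m \geq \max\{m_0,\, n\sqrt{d}/\pi\}$ the inclusion of this space into $Cv_m \otimes e^{M(\,\cdot\,/m)}(\R^{2d}_{x,\xi})$ is norm-nonincreasing because $\mathcal{V}$ is decreasing and $M$ is increasing; composing proves the claim. The continuity of $V^\ast_\psi$ is entirely analogous: for a given $n$ pick $m_0 \geq n$ and $C > 0$ with $v_{m_0}(x+y) \leq C v_n(x) e^{A(\tau y)}$, apply Lemma \ref{double-int-test} with $w = v_n$, $v = v_{m_0}$ and $h = 1/n$ to obtain continuity of $V^\ast_\psi : Cv_n \otimes e^{M(\,\cdot\,/n)}(\R^{2d}_{x,\xi}) \to \mathcal{D}^{M_p,\,1/(4nH^{d+1}\pi)}_{L^\infty_{v_{m_0}}}(\R^d)$, and then embed the target into $\mathcal{D}^{M_p,1/m}_{L^\infty_{v_m}}(\R^d) \hookrightarrow \mathcal{B}^{\{M_p\}}_\mathcal{V}(\R^d)$ for $m \geq \max\{m_0,\, 4nH^{d+1}\pi\}$.

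For the reconstruction formula, the two continuity statements already show that $V^\ast_\gamma \circ V_\psi$ is a continuous endomorphism of $\mathcal{B}^{\{M_p\}}_\mathcal{V}(\R^d)$, so it suffices to verify \eqref{reconstruction-B} for a fixed $\varphi \in \mathcal{D}^{M_p,1/n}_{L^\infty_{v_n}}(\R^d)$ and, both sides being continuous functions, to do so pointwise. The value $V^\ast_\gamma V_\psi\varphi(t)$ of the weak integral equals, at each $t$, the scalar integral $\int\int_{\R^{2d}} V_\psi\varphi(x,\xi)\,\gamma(t-x)\,e^{2\pi i\xi t}\dx\dxi$, which is absolutely convergent: Lemma \ref{STFT-test} together with the lower bound $v_k \geq c_k\,e^{-A(\tau\,\cdot\,)}$ that $A_p$-admissibility imposes on every weight $v_k$ (set $x=-y$ in the defining inequality) yields $|V_\psi\varphi(x,\xi)| \leq C' e^{A(\tau x)} e^{-M(c\xi)}$ for some $c>0$, while $\gamma \in \mathcal{S}^{(M_p)}_{(A_p)}(\R^d)$ decays faster than every $e^{-A(\lambda\,\cdot\,)}$; absolute convergence then follows from the consequence of $(M.2)'$ for $A_p$ that $e^{A(\tau x)-A(\lambda(t-x))}$ is integrable in $x$ once $\lambda$ is large, and from $M(t)/\log t \to \infty$. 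I would then insert the convergence factor $e^{-\pi\varepsilon|\xi|^2}$ (harmless by dominated convergence), unfold $V_\psi\varphi$ into its defining integral, and apply Fubini, now legitimate since the resulting triple integral converges absolutely; carrying out the $\xi$-integration produces the Gaussian kernel $\varepsilon^{-d/2}e^{-\pi|t-s|^2/\varepsilon}$, and after the $x$-integration one is left with an integral of $\varphi(s)$ against $g(s-t)$ weighted by that kernel, where $g(z) = \int_{\R^d}\overline{\psi(z+u)}\gamma(u)\du$ is the cross-correlation of $\gamma$ and $\psi$. Letting $\varepsilon \to 0^+$ and using that the Gaussian is an approximate identity and that $g(0) = (\gamma,\psi)_{L^2}$, the limit is $(\gamma,\psi)_{L^2}\varphi(t)$, which is \eqref{reconstruction-B}; the hypothesis $\mathcal{S}^{(M_p)}_{(A_p)}(\R^d) \neq \{0\}$ enters only to guarantee that such a pair $\psi,\gamma$ exists.

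The main obstacle is this desingularization step: the insertion of the Gaussian, the use of Fubini on the triple integral, and the limit $\varepsilon \to 0^+$ all hinge on the growth/decay bookkeeping above, i.e.\ matching the super-exponential decay of $\psi$ and $\gamma$ against the at-most-$e^{A(\tau\,\cdot\,)}$ growth of $\varphi$ and the super-polynomial decay in $\xi$ of $V_\psi\varphi$. Everything else is a routine reduction to the Banach-space estimates of Lemmas \ref{STFT-test} and \ref{double-int-test}; note that the argument re-derives \eqref{reconstruction-L2} rather than invoking it, which conveniently sidesteps the fact that $\mathcal{B}^{\{M_p\}}_\mathcal{V}(\R^d)$ need not be contained in $L^2(\R^d)$.
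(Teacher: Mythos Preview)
Your continuity arguments for $V_\psi$ and $V^\ast_\psi$ are exactly the paper's, just written out at the level of the individual Banach steps; nothing to add there.

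For the reconstruction formula you take a genuinely different route. The paper argues by duality: since both $V^\ast_\gamma(V_\psi\varphi)$ and $\varphi$ are continuous and $O(e^{A(\tau\,\cdot\,)})$, it suffices to check $\int V^\ast_\gamma(V_\psi\varphi)\,\chi = (\gamma,\psi)_{L^2}\int \varphi\,\chi$ for every $\chi \in \mathcal{S}^{(M_p)}_{(A_p)}(\R^d)$; one then switches the order of integration (justified via Corollary \ref{STFT-GSb}) and lands on $\int V^\ast_{\overline\psi}(V_{\overline\gamma}\chi)\,\varphi$, where \eqref{reconstruction-L2} applies because $\chi\in L^2$. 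Your approach instead works pointwise: you regularize with $e^{-\pi\varepsilon|\xi|^2}$, Fubini the triple integral, perform the Gaussian $\xi$-integral, recognize the $x$-integral as the cross-correlation $g$ of $\gamma$ and $\psi$, and pass to the limit $\varepsilon\to 0^+$ using that the Gaussian kernel is an approximate identity and $g(0)=(\gamma,\psi)_{L^2}$. This is correct; the growth/decay bookkeeping you outline (the lower bound $v_k\geq c_k e^{-A(\tau\,\cdot\,)}$ from $A_p$-admissibility, the integrability of $e^{A(\tau x)-A(\lambda(t-x))}$ via $(M.2)'$ for $A_p$, and $M(t)/\log t\to\infty$) is exactly what is needed. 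The trade-off: the paper's proof is shorter and reuses \eqref{reconstruction-L2}, at the cost of implicitly relying on $\mathcal{S}^{(M_p)}_{(A_p)}(\R^d)$ being rich enough to separate $O(e^{A(\tau\,\cdot\,)})$-bounded continuous functions; your argument is self-contained and never needs $\varphi\in L^2$, but requires the regularization and approximate-identity limit to be carried out carefully.
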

\begin{proof}
Since $\mathcal{V}$ is $A_p$-admissible, the continuity of $V_\psi$ and $V^\ast_\psi$ follows directly from Lemmas \ref{STFT-test} and \ref{double-int-test}, respectively. We now show \eqref{reconstruction-B}. Let $\varphi \in \mathcal{B}^{\{M_p\}}_\mathcal{V}(\R^d)$ be arbitrary. As $V^\ast_\gamma(V_\psi\varphi)$ and $\varphi$ are both $O(e^{A(\tau \cdot)})$-bounded continuous functions, it suffices to show that 
$$
\int_{\R^d} V^\ast_\gamma(V_\psi\varphi)(t) \chi(t) \dt = (\gamma, \psi)_{L^2} \int_{\R^d}\varphi(t) \chi(t) \dt
$$
for all $\chi \in \mathcal{S}^{(M_p)}_{(A_p)}(\R^d)$. Formula \eqref{reconstruction-L2} implies that
\begin{align*}
\int_{\R^d} V^\ast_\gamma(V_\psi\varphi)(t) \chi(t) \dt &= \int_{\R^d} \left(\int \int_{\R^{2d}} V_\psi \varphi(x,\xi) M_\xi T_x \gamma(t) \dx \dxi \right)\chi(t) \dt \\
&= \int \int_{\R^{2d}} \left ( \int_{\R^d} \varphi(t) M_{-\xi} T_x \overline{\psi}(t) \dt\right)V_{\overline{\gamma}} \chi(x,-\xi)\dx \dxi \\
&= \int_{\R^d} V^\ast_{\overline \psi}(V_{\overline \gamma}\chi)(t) \varphi(t)\dt \\
&= (\gamma, \psi)_{L^2} \int_{\R^d}\varphi(t) \chi(t) \dt,
\end{align*} 
where the switching of the integrals is permitted because of Corollary \ref{STFT-GSb} and the first part of this proposition.
\end{proof}

In order to show the analogue of Proposition \ref{STFT-gg} for $\widetilde{\mathcal{B}}^{\{M_p\}}_\mathcal{V}(\R^d)$, we need the following technical lemma.
\begin{lemma}\label{admissible-lemma}
Let $\mathcal{V} = (v_n)_{n \in \N}$ be an $A_p$-admissible decreasing weight system. For every $v \in \overline{V}$ there is $\overline{v} \in \overline{V}$ such that
$v(x+y) \leq \overline{v}(x)e^{A(\tau y)}$, $x,y \in \R^d$.
\end{lemma}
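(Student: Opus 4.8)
The plan is to produce $\overline v$ by translating $v$ around and damping by the exponential weight, and then to correct a possible defect of regularity. Concretely, with $\tau>0$ the constant furnished by the $A_p$-admissibility of $\mathcal{V}$, I would set
$$
u(x):=\sup_{y\in\R^d} v(x+y)\,e^{-A(\tau y)},\qquad x\in\R^d .
$$
Since $A(0)=0$, the choice $y=0$ gives $u\ge v\ge 0$, and for any fixed $y_0\in\R^d$ the definition yields $v(x+y_0)e^{-A(\tau y_0)}\le u(x)$, i.e. $v(x+y_0)\le u(x)e^{A(\tau y_0)}$. Thus $u$ already satisfies the inequality of the lemma; what remains is to check that $u$ has the right size and to replace it by an upper semicontinuous function with the same properties.

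For the size estimate, I would fix $n\in\N$, use the $A_p$-admissibility of $\mathcal{V}$ to choose $m\ge n$ and $C_1>0$ with $v_m(x+y)\le C_1 v_n(x)e^{A(\tau y)}$ for all $x,y\in\R^d$, and use $v\in\overline{V}$ to choose $C_2>0$ with $v\le C_2 v_m$. Combining these, $v(x+y)\le C_2 v_m(x+y)\le C_1C_2\, v_n(x)\, e^{A(\tau y)}$ for all $x,y$, hence $u(x)=\sup_y v(x+y)e^{-A(\tau y)}\le C_1C_2\, v_n(x)$. In particular (taking $n=1$) $u$ is finite on $\R^d$, and $\sup_x u(x)/v_n(x)<\infty$ for every $n$.

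Finally, I would pass to the upper semicontinuous regularization
$$
\overline v(x):=\inf_{\delta>0}\ \sup_{|z-x|\le\delta} u(z),
$$
the smallest upper semicontinuous majorant of $u$. Then $\overline v\ge u\ge 0$ and $\overline v$ is upper semicontinuous; moreover, since each $v_n$ is continuous we have $\inf_{\delta>0}\sup_{|z-x|\le\delta}v_n(z)=v_n(x)$, so the bound $u\le C_1C_2 v_n$ from the previous step upgrades to $\overline v\le C_1C_2 v_n$. Hence $\overline v\in\overline{V}$, and $v(x+y)\le u(x)e^{A(\tau y)}\le\overline v(x)e^{A(\tau y)}$ for all $x,y\in\R^d$, which is the assertion.

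The only genuine obstacle is that $u$, being a supremum of translates of an upper semicontinuous function over an uncountable index set, need not itself be upper semicontinuous — nor even Borel measurable — so one cannot simply take $\overline v=u$; the regularization step is precisely what repairs this, and it is harmless because membership in $\overline{V}$ is tested only against the continuous weights $v_n$. Everything else is a routine unwinding of the definitions of $A_p$-admissibility and of the maximal Nachbin family $\overline{V}$.
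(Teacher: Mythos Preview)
Your argument is correct, but the construction differs from the paper's. You form the pointwise supremum $u(x)=\sup_y v(x+y)e^{-A(\tau y)}$, verify $u\le C\,v_n$ for every $n$, and then pass to the upper semicontinuous regularization so as to land in $\overline V$. The paper instead builds $\overline v$ directly as an infimum of continuous functions: it extracts a subsequence $(n_j)$ with $v_{n_{j+1}}(x+y)\le C_j v_{n_j}(x)e^{A(\tau y)}$, chooses constants $C'_j$ with $v\le C'_j v_{n_j}$, and sets $\overline v=\inf_j C_jC'_{j+1}v_{n_j}$, which is automatically upper semicontinuous and belongs to $\overline V$; the inequality then follows in one line. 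Your route is arguably more transparent---$u$ is by definition the smallest function satisfying the inequality, and the regularization step is the only thing needed to fix membership in $\overline V$---but it requires you to argue separately that the (possibly non-measurable) supremum stays dominated after regularization, which you do correctly using the continuity of the $v_n$. The paper's infimum construction sidesteps that issue entirely at the price of a slightly less obvious choice of $\overline v$.
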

\begin{proof} Find a strictly increasing sequence of natural numbers $(n_j)_{j \in \N}$ such that 
$v_{n_{j+1}}(x+y) \leq C_jv_{n_j}(x)e^{A(\tau y)}$, $x,y \in \R^d$, for some $C_j > 0$. Pick $C'_j > 0$ such that $v \leq C'_j v_{n_j}$ for all $j \in \N$. Set $\overline{v} = \inf_{j \in \N} C_jC'_{j+1}v_{n_j} \in \overline{V}$. We have that
$$
v(x+y) \leq \inf_{j \in \N} C'_{j+1}v_{n_{j+1}}(x+y) \leq e^{A(\tau y)}  \inf_{j \in \N} C_jC'_{j+1}v_{n_{j}}(x) =  \overline{v}(x)e^{A(\tau y)}.
$$
\end{proof}

\begin{proposition}\label{STFT-gg-projective} Let $M_p$ and $A_p$ be weight sequences satisfying $(M.1)$ and $(M.2)'$, let $\mathcal{V} = (v_n)_{n \in \N}$ be an $A_p$-admissible decreasing weight system, and let $\psi \in \mathcal{S}^{(M_p)}_{(A_p)}(\R^d)$. Then, the following mappings are continuous: 
$$
V_\psi: \widetilde{\mathcal{B}}^{\{M_p\}}_\mathcal{V}(\R^d) \rightarrow C\overline{V}(\mathcal{V}\otimes\mathcal{V}_{\{M_p\}})(\R^{2d}_{x,\xi}) 
$$
and
$$ 
V^\ast_\psi: C\overline{V}(\mathcal{V}\otimes\mathcal{V}_{\{M_p\}})(\R^{2d}_{x,\xi})  \rightarrow  \widetilde{\mathcal{B}}^{\{M_p\}}_\mathcal{V}(\R^d).
$$
\end{proposition}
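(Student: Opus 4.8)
The strategy is to establish the two continuity assertions separately. For $V_\psi$ I will apply Lemma \ref{STFT-test} to the weight sequences $M_p\prod^p_{j=0}\rho_j$, $(\rho_j)\in\mathfrak{R}$, while for $V^*_\psi$ I will rerun the estimate underlying Lemma \ref{double-int-test} with a carefully chosen auxiliary sequence. Throughout I shall use the following facts. A sequence $M_p\prod^p_{j=0}r_j$, $(r_j)\in\mathfrak{R}$, is again a weight sequence and still satisfies $(M.1)$ (because $(r_j)$ is increasing), and its associated function is $M_{r_j}$; moreover $M_{r_j}(\lambda\,\cdot\,)=M_{(r_j/\lambda)}(\,\cdot\,)$ with $(r_j/\lambda)\in\mathfrak{R}$ for every $\lambda>0$. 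With $N_p:=M_p\prod^p_{j=0}\rho_j$, $(\rho_j)\in\mathfrak{R}$, one has $\|\,\cdot\,\|_{\mathcal{D}^{N_p,1}_{L^\infty_v}}=\|\,\cdot\,\|_{\mathcal{D}^{M_p,\rho_j}_{L^\infty_v}}$, a defining seminorm of $\widetilde{\mathcal{B}}^{\{M_p\}}_\mathcal{V}(\R^d)$. Finally I shall use the description of the maximal Nachbin family of $\mathcal{V}_{\{M_p\}}$: a non-negative function $w$ on $\R^d$ belongs to $\overline{V}(\mathcal{V}_{\{M_p\}})$ if and only if $w(\xi)\le Ce^{M_{r_j}(\xi)}$ for some $C>0$ and $(r_j)\in\mathfrak{R}$; the ``if'' part is elementary ($\prod^p_{j=0}r_j\ge c_nn^p$ for large $p$, hence $e^{M_{r_j}(\xi)}\le C_ne^{M(\xi/n)}$), and the ``only if'' part is a standard characterization of $\mathfrak{R}$, the continuous counterpart of \cite[Lemma 3.4]{Komatsu3}.

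To prove that $V_\psi$ is continuous, fix $u\in\overline{V}(\mathcal{V}\otimes\mathcal{V}_{\{M_p\}})$; by Remark \ref{remark-tensor-weights-2} write $u\le v\otimes w$ with $v\in\overline{V}$ and $w\in\overline{V}(\mathcal{V}_{\{M_p\}})$, and by the above, after a rescaling, $w(\xi)\le Ce^{M_{\rho_j}(\pi\xi/\sqrt d)}$ for some $(\rho_j)\in\mathfrak{R}$, where $M_{\rho_j}$ is the associated function of $N_p=M_p\prod^p_{j=0}\rho_j$. Choose $\overline{v}\in\overline{V}$ with $v(x+y)\le\overline{v}(x)e^{A(\tau y)}$ (Lemma \ref{admissible-lemma}). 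An inspection of the proof of Lemma \ref{STFT-test} shows that $(M.2)'$ is never used for the first weight sequence; hence that lemma is valid with $M_p$ replaced by $N_p$ (note $\psi\in\mathcal{S}^{(M_p)}_{(A_p)}\subseteq\mathcal{S}^{(N_p)}_{(A_p)}$) and yields $\|V_\psi\varphi\|_{Cv\otimes e^{M_{\rho_j}(\pi\,\cdot\,/\sqrt d)}}\le C\|\varphi\|_{\mathcal{D}^{N_p,1}_{L^\infty_{\overline{v}}}}$. Consequently $\|V_\psi\varphi\|_u\le\|V_\psi\varphi\|_{v\otimes w}\le CC'\|\varphi\|_{\mathcal{D}^{M_p,\rho_j}_{L^\infty_{\overline{v}}}}$, which is the required estimate.

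To prove that $V^*_\psi$ is continuous, fix a defining seminorm $\|\,\cdot\,\|_{\mathcal{D}^{M_p,r_j}_{L^\infty_v}}$ of $\widetilde{\mathcal{B}}^{\{M_p\}}_\mathcal{V}(\R^d)$ and choose $\overline{v}\in\overline{V}$ as in Lemma \ref{admissible-lemma}. Put $K:=4\pi H^{d+1}$ and $s_j:=K^{-1}r_{\max(j-d-1,0)}$; then $(s_j)\in\mathfrak{R}$, the sequence $P_p:=M_p\prod^{p+d+1}_{j=0}s_j$ is log-convex (since $(M.1)$ holds for $M_p$ and $(s_j)$ is increasing), and $K^p\prod^{p+d+1}_{j=0}s_j=K^{-(d+2)}r^{d+1}_0\prod^p_{j=0}r_j$ for every $p$. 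Set $u:=\overline{v}\otimes e^{M_{s_j}(\,\cdot\,)}\in\overline{V}(\mathcal{V}\otimes\mathcal{V}_{\{M_p\}})$. Rerunning the estimate in the proof of Lemma \ref{double-int-test} --- bounding $|F(x,\xi)|\,v(t)\le\|F\|_u\,e^{-M_{s_j}(\xi)}e^{A(\tau(t-x))}$, bounding $\int_{\R^d}(2\pi|\xi|)^{|\beta|}e^{-M_{s_j}(\xi)}\dxi\le C(2\pi H^{d+1})^{|\beta|}M_\beta\prod^{|\beta|+d+1}_{j=0}s_j$ by means of $(M.1)$ and $(M.2)'$ for $M_p$, keeping the factor $\prod^{|\beta|+d+1}_{j=0}s_j$ attached to $M_\beta$, invoking the log-convexity of $P_p$ in place of that of $M_p$ when summing over $\beta$, and taking the window parameter of $\psi$ large --- one arrives at
$$
\sup_{t\in\R^d}|\partial^\alpha V^*_\psi F(t)|\,v(t)\ \le\ C\|F\|_u\,M_\alpha\prod^{|\alpha|+d+1}_{j=0}s_j\,K^{|\alpha|},\qquad\alpha\in\N^d .
$$
Dividing by $M_\alpha\prod^{|\alpha|}_{j=0}r_j$ and using the displayed identity for $K^p\prod s_j$ yields $\|V^*_\psi F\|_{\mathcal{D}^{M_p,r_j}_{L^\infty_v}}\le C'\|F\|_u$, as required; well-definedness of $V^*_\psi F$ as a $C^\infty$-function is seen along the way.

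The step I expect to be the real obstacle is the choice of $(s_j)$ in the second part. A direct use of Lemma \ref{double-int-test} with $M_p$ only controls the $\mathcal{D}^{M_p,h}_{L^\infty_v}$-norm by the $Cw\otimes e^{M(h\,\cdot\,)}$-norm, but $e^{M(h\,\cdot\,)}$ grows faster than every $e^{M(\,\cdot\,/n)}$ and hence does not lie in $\overline{V}(\mathcal{V}_{\{M_p\}})$; and the lemma cannot simply be applied to $M_p\prod^p_{j=0}r_j$ because that sequence need not satisfy $(M.2)'$. Shifting the indices of $(r_j)$ by $d+1$ places and damping by the fixed factor $K$ is precisely what makes the geometric losses $(2\pi H^{d+1})^{|\alpha|}$ produced by the modulation and by the $(M.2)'$-inequality for $M_p$ get absorbed into $\prod^{|\alpha|+d+1}_{j=0}s_j/\prod^{|\alpha|}_{j=0}r_j=O(1)$. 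All remaining points --- interchanging the order of integration, convergence of the window integrals through $(M.2)'$ for $A_p$, and the elementary bounds on associated functions --- proceed exactly as in the proofs of Lemmas \ref{STFT-test} and \ref{double-int-test}.
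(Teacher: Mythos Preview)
Your argument is correct, and the two halves are handled in essentially the right way. The difference from the paper lies in how you deal with the failure of $(M.2)'$ for the modified sequences $M_p\prod_{j=0}^{p}r_j$. The paper's proof invokes \cite[Lemma~2.3]{Prangoski} to replace a given $(r_j)\in\mathfrak{R}$ by $(r'_j)\in\mathfrak{R}$ with $r'_j\le r_j$ for large $j$ and $r'_{j+1}\le 2^{j+1}r'_j$, so that $M_p\prod_{j=0}^{p}r'_j$ \emph{does} satisfy $(M.2)'$; Lemmas~\ref{STFT-test} and~\ref{double-int-test} can then be applied verbatim to this new weight sequence, and the $V^\ast_\psi$ part is dismissed with ``similarly''. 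You instead bypass Prangoski's lemma entirely: for $V_\psi$ you observe (correctly) that the proof of Lemma~\ref{STFT-test} never uses $(M.2)'$ for the first weight sequence, only $(M.1)$; and for $V^\ast_\psi$ you rerun the estimate of Lemma~\ref{double-int-test} by hand, using $(M.2)'$ only for the original $M_p$ to control the $\xi$-integral and absorbing the resulting $(2\pi H^{d+1})^{|\alpha|}$ loss into your shifted sequence $s_j=K^{-1}r_{\max(j-d-1,0)}$. The identity $K^{p}\prod_{j=0}^{p+d+1}s_j=K^{-(d+2)}r_0^{d+1}\prod_{j=0}^{p}r_j$ is exactly what makes this work, and your use of log-convexity of $P_p=M_p\prod_{j=0}^{p+d+1}s_j$ (together with $\psi\in\mathcal{S}^{(M_p)}_{(A_p)}\subset\mathcal{S}^{(P_p)}_{(A_p)}$) in the binomial sum is legitimate. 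The trade-off: the paper's route is shorter and treats both mappings uniformly via one external lemma, while yours is self-contained but requires reopening the proofs of Lemmas~\ref{STFT-test} and~\ref{double-int-test}. A cosmetic point: your $(s_j)$ is only non-decreasing (constant for $j\le d+1$), not strictly increasing, but this is harmless since what you actually need is $e^{M_{s_j}}\in\overline{V}(\mathcal{V}_{\{M_p\}})$, which you verify directly.
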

\begin{proof}
Let $u \in \overline{V}(\mathcal{V}\otimes\mathcal{V}_{\{M_p\}})$ be arbitrary. By Remark \ref{remark-tensor-weights-2} and \cite[Lemma 4.5$(i)$]{D-V-VEmbeddingUltra16} there is $v \in \overline{V}(\mathcal{V})$ and $r_j \in \mathfrak{R}$ such that $u \leq v \otimes e^{M_{r_j}}$. According to \cite[Lemma 2.3]{Prangoski} there is $r'_j \in \mathfrak{R}$ such that $r'_j \leq r_j$ for $j$ large enough and $r'_{j+1}\leq 2^{j+1}r'_j$ for all $j \in \N$.
Hence the sequence $M_p \prod^p_{j = 0} r'_j$ satisfies $(M.2)'$. Next, by Lemma \ref{admissible-lemma} there is $\overline{v} \in \overline{V}$ such that $v(x+y) \leq \overline{v}(x)e^{A(\tau y)}$ for all  $x,y \in \R^d$. Therefore, Lemma \ref{STFT-test} implies that the mapping
$V_\psi: \mathcal{D}^{M_p, \pi r'_j/\sqrt{d}}_{L^\infty_{\overline{v}}}(\R^d) \rightarrow Cv\otimes e^{M_{r'_j}}(\R^{2d})$
is well-defined and continuous. As the inclusion mapping $Cv\otimes e^{M_{r'_j}}(\R^{2d}) \rightarrow Cu(\R^{2d})$
is continuous, we may conclude that $V_\psi$ is continuous. Similarly, by using Lemma \ref{double-int-test}, one can show that $V^\ast_\psi$ is continuous.
\end{proof}
We are ready to prove our main theorem.
\begin{theorem}\label{main-theorem}
Let $M_p$ and $A_p$ be weight sequences satisfying $(M.1)$ and $(M.2)'$ such that $\mathcal{S}^{(M_p)}_{(A_p)}(\R^d) \neq \{0\}$ and let $\mathcal{V} = (v_n)_{n \in \N}$ be an $A_p$-admissible decreasing weight system satisfying $(V)$. Then, $\mathcal{B}^{\{M_p\}}_\mathcal{V}(\R^d)$ and $\widetilde{\mathcal{B}}^{\{M_p\}}_\mathcal{V}(\R^d)$ coincide topologically.
\end{theorem}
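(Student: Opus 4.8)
The plan is to exploit the reconstruction formula \eqref{reconstruction-B} together with the already established mapping properties of the STFT in order to transfer the problem to the setting of weighted $(LB)$-spaces of continuous functions, where Bastin's theorem (Theorem \ref{Bastin}) applies. Fix, via the hypothesis $\mathcal{S}^{(M_p)}_{(A_p)}(\R^d)\neq\{0\}$, a nonzero window $\psi\in\mathcal{S}^{(M_p)}_{(A_p)}(\R^d)$ that is its own synthesis window, so that $(\psi,\psi)_{L^2}\neq 0$ and $(\psi,\psi)_{L^2}^{-1}V^\ast_\psi\circ V_\psi=\operatorname{id}$ on $\mathcal{B}^{\{M_p\}}_\mathcal{V}(\R^d)$ by Proposition \ref{STFT-gg}; the same identity also holds on $\widetilde{\mathcal{B}}^{\{M_p\}}_\mathcal{V}(\R^d)$ because both spaces coincide algebraically (Lemma \ref{equality-sets}) and the formula is an algebraic identity on functions. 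By Lemma \ref{equality-sets} the two spaces are equal as sets and the inclusion $\mathcal{B}^{\{M_p\}}_\mathcal{V}(\R^d)\to\widetilde{\mathcal{B}}^{\{M_p\}}_\mathcal{V}(\R^d)$ is continuous, so it remains only to prove that this inclusion is a topological isomorphism, i.e.\ that the reverse map is also continuous.

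For the reverse continuity, consider the diagram of continuous maps furnished by Propositions \ref{STFT-gg} and \ref{STFT-gg-projective}:
\begin{equation*}
\widetilde{\mathcal{B}}^{\{M_p\}}_\mathcal{V}(\R^d)\xrightarrow{\;V_\psi\;}C\overline{V}(\mathcal{V}\otimes\mathcal{V}_{\{M_p\}})(\R^{2d})\quad\text{and}\quad \mathcal{V}\otimes\mathcal{V}_{\{M_p\}}C(\R^{2d})\xrightarrow{\;(\psi,\psi)_{L^2}^{-1}V^\ast_\psi\;}\mathcal{B}^{\{M_p\}}_\mathcal{V}(\R^d).
\end{equation*}
The crucial link is the claim that $\mathcal{V}\otimes\mathcal{V}_{\{M_p\}}$ satisfies condition $(V)$, so that Bastin's theorem gives the topological identity $\mathcal{V}\otimes\mathcal{V}_{\{M_p\}}C(\R^{2d})=C\overline{V}(\mathcal{V}\otimes\mathcal{V}_{\{M_p\}})(\R^{2d})$; the identity map between these is then continuous. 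Granting this, the composition
\begin{equation*}
\widetilde{\mathcal{B}}^{\{M_p\}}_\mathcal{V}(\R^d)\xrightarrow{V_\psi}C\overline{V}(\mathcal{V}\otimes\mathcal{V}_{\{M_p\}})(\R^{2d})=\mathcal{V}\otimes\mathcal{V}_{\{M_p\}}C(\R^{2d})\xrightarrow{(\psi,\psi)_{L^2}^{-1}V^\ast_\psi}\mathcal{B}^{\{M_p\}}_\mathcal{V}(\R^d)
\end{equation*}
is continuous, and by the reconstruction formula it equals the identity; this is precisely the missing continuity, and the theorem follows.

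It therefore remains to verify that $\mathcal{V}\otimes\mathcal{V}_{\{M_p\}}$ satisfies $(V)$. By Remark \ref{remark-tensor-weights-1} it suffices to check that both factors satisfy $(V)$ — and $\mathcal{V}$ does by hypothesis. For $\mathcal{V}_{\{M_p\}}=(e^{M(\,\cdot\,/n)})_{n\in\N}$ I would argue that it satisfies condition $(S)$ of Remark \ref{remark-S}, which by that remark implies $(V)$: for fixed $n$, pick $m=H^{d+1}n$ (with $H$ the constant from $(M.2)'$); then the pointwise bound $e^{M(t)-M(H^{d+1}t)}\leq (2C_0)^{d+1}(1+t^{d+1})^{-1}$ recorded in the excerpt, applied with $t=|x|/(H^{d+1}n)$, shows $e^{M(x/m)}/e^{M(x/n)}\to 0$ as $|x|\to\infty$, as required. (Since $e^{M(t)}$ is nondecreasing in $t$ one also has $e^{M(x/m)}\le e^{M(x/n)}$, so the system is indeed pointwise decreasing.) Alternatively, since $\R^d$ is locally compact and $\sigma$-compact, every decreasing weight system on it automatically satisfies the side condition of Bastin's remark, and one could invoke necessity, but the direct $(S)$-argument is cleaner.

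The main obstacle — though a mild one given the machinery already assembled — is ensuring that $(V)$ is genuinely preserved under the tensor construction and, more importantly, that the two propositions on the STFT can be chained through Bastin's theorem without a mismatch between the projective hull $C\overline{V}(\mathcal{V}\otimes\mathcal{V}_{\{M_p\}})$ appearing as the target of $V_\psi$ on the tilde space and the inductive limit $\mathcal{V}\otimes\mathcal{V}_{\{M_p\}}C$ appearing as the source of $V^\ast_\psi$ into the non-tilde space; this is exactly what Bastin's theorem resolves, contingent on the $(V)$ verification above. Everything else is bookkeeping: the reconstruction formula \eqref{reconstruction-B} is already proved on $\mathcal{B}^{\{M_p\}}_\mathcal{V}(\R^d)$, and its validity on $\widetilde{\mathcal{B}}^{\{M_p\}}_\mathcal{V}(\R^d)$ is automatic from the algebraic equality of the two spaces.
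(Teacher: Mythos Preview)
Your proposal is correct and follows essentially the same route as the paper: reduce to the reverse continuity via Lemma~\ref{equality-sets}, verify that $\mathcal{V}_{\{M_p\}}$ satisfies $(S)$ (hence $(V)$) using the $(M.2)'$ estimate, invoke Remark~\ref{remark-tensor-weights-1} and Bastin's theorem to identify $\mathcal{V}\otimes\mathcal{V}_{\{M_p\}}C(\R^{2d})$ with its projective hull, and then factor the identity as $V^\ast\circ V_\psi$ through this space using Propositions~\ref{STFT-gg} and~\ref{STFT-gg-projective} together with the reconstruction formula~\eqref{reconstruction-B}. The only cosmetic differences are that the paper allows distinct windows $\psi,\gamma$ while you take $\gamma=\psi$, and that your choice $m=H^{d+1}n$ in the $(S)$-verification need not be an integer, but rounding up fixes this immediately.
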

\begin{proof}
By Lemma \ref{equality-sets} it suffices to show that the inclusion mapping $\iota: \widetilde{\mathcal{B}}^{\{M_p\}}_\mathcal{V}(\R^d) \rightarrow \mathcal{B}^{\{M_p\}}_\mathcal{V}(\R^d) $ is continuous. Since $M_p$ satisfies $(M.2)'$, the decreasing weight system $\mathcal{V}_{\{M_p\}}$ satisfies $(S)$ and thus condition $(V)$ (see Remark \ref{remark-S}). Hence Proposition \ref{Bastin} and Remark \ref{remark-tensor-weights-1} imply that $\mathcal{V} \otimes \mathcal{V}_{\{M_p\}}C(\R^{2d}) =  C\overline{V}(\mathcal{V} \otimes \mathcal{V}_{\{M_p\}})(\R^{2d})$ topologically. Choose $\psi, \gamma \in \mathcal{S}^{\{M_p\}}_{\{A_p\}}(\R^d)$ such that $(\gamma, \psi)_{L^2} = 1$. By \eqref{reconstruction-B} the following diagram commutes
\begin{center}
\begin{tikzpicture}
  \matrix (m) [matrix of math nodes, row sep=2em, column sep=2em]
    {\widetilde{\mathcal{B}}^{\{M_p\}}_\mathcal{V}(\R^d) & C\overline{V}(\mathcal{V} \otimes \mathcal{V}_{\{M_p\}})(\R^{2d}) = \mathcal{V} \otimes \mathcal{V}_{\{M_p\}}C(\R^{2d})  \\
    \mathcal{B}^{\{M_p\}}_\mathcal{V}(\R^d) & \mbox{} \\
   };
  { [start chain] \chainin (m-1-1);
\chainin (m-1-2)  [join={node[above,labeled] {V_\psi}}];

}
  { [start chain] \chainin (m-1-1);
\chainin (m-2-1)[join={node[left,labeled] {\iota}}];;
}
  { [start chain] \chainin (m-1-2);
\chainin (m-2-1) [join={node[below,labeled] {V^\ast_\gamma}}];
}
\end{tikzpicture}
\end{center}
Propositions \ref{STFT-gg} and \ref{STFT-gg-projective} imply that $V_\psi$ and $V^\ast_\gamma$ are continuous, whence $\iota$ is also continuous.
\end{proof}
\begin{remark}
Let $M_p$ and $A_p$ be weight sequences satisfying $(M.1)$ and $(M.2)'$ such that $\mathcal{S}^{(M_p)}_{(A_p)}(\R^d) \neq \{0\}$. By applying Theorem \ref{main-theorem} to $\mathcal{V} = \mathcal{V}_{\{A_p\}}$ (and using  \cite[Lemma 4.5$(i)$]{D-V-VEmbeddingUltra16}), we obtain the well known projective description of the classical Gelfand-Shilov space $\mathcal{S}^{\{M_p\}}_{\{A_p\}}(\R^d)$ of Roumieu type \cite[Lemma 4]{Pil94}.
\end{remark}

We end this article by stating an important particular case of Theorem \ref{main-theorem}. Given a positive function $\omega$ on $\R^d$ such that $1/ \omega$ is locally bounded, we define
$$
\mathcal{D}^{\{M_p\}}_{L^\infty_\omega}(\R^d) := \varinjlim_{h \rightarrow 0^+}\mathcal{D}^{M_p,h}_{L^\infty_\omega}(\R^d)
$$
a Hausdorff $(LB)$-space. Furthermore, we write $\widetilde{\mathcal{D}}^{\{M_p\}}_{L^\infty_\omega}(\R^d)$ for the space consisting of all $\varphi \in C^\infty(\R^d)$ such that $\| \varphi \|_{\mathcal{D}^{M_p,r_j}_{L^\infty_\omega}} < \infty$ for all $r_j \in \mathfrak{R}$ and endow it with the topology generated by the system of seminorms $\{ \| \, \cdot \, \|_{\mathcal{D}^{M_p,r_j}_{L^\infty_\omega}} \, : \, r_j \in \mathfrak{R} \}$.
\begin{theorem}\label{main-theorem-cor}
Let $M_p$ and $A_p$ be weight sequences satisfying $(M.1)$ and $(M.2)'$ such that $\mathcal{S}^{(M_p)}_{(A_p)}(\R^d) \neq \{0\}$ and let $\omega$ be a positive measurable function on $\R^d$ such that $\omega(x +y) \leq C\omega(x)e^{A(\tau y)}$, $x,y \in \R^d$, for some $C, \tau > 0$. Then, $\mathcal{D}^{\{M_p\}}_{L^\infty_\omega}(\R^d)$ and $\widetilde{\mathcal{D}}^{\{M_p\}}_{L^\infty_\omega}(\R^d)$ coincide topologically.
\end{theorem}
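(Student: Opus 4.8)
The plan is to deduce Theorem~\ref{main-theorem-cor} from Theorem~\ref{main-theorem} by specializing the latter to the \emph{constant} weight system $\mathcal{V} = (\omega)_{n \in \N}$. The one thing that obstructs an immediate reduction is that a decreasing weight system must consist of \emph{continuous} functions, while $\omega$ is only assumed measurable; so I would first replace $\omega$ by an equivalent continuous weight.

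For this regularization step, observe first that the hypothesis $\omega(x+y) \leq C\omega(x)e^{A(\tau y)}$, used also with $(x,y)$ replaced by $(x+y,-y)$ and with the evenness of the associated function $A$, gives $\omega(x)/(Ce^{A(\tau y)}) \leq \omega(x+y) \leq C\omega(x)e^{A(\tau y)}$; in particular $\omega$ and $1/\omega$ are locally bounded. Pick $\chi \in C_c(\R^d)$ with $\chi \geq 0$, $\operatorname{supp}\chi \subseteq \{|z|\leq 1\}$ and $\int\chi = 1$, and put $\widetilde\omega := \omega \ast \chi$. Then $\widetilde\omega$ is positive and continuous, the two-sided bound above (for $|z|\leq 1$) yields $c\,\omega \leq \widetilde\omega \leq C'\omega$ for some constants $c, C' > 0$, and translation-invariance of convolution gives $\widetilde\omega(x+y) \leq C\widetilde\omega(x)e^{A(\tau y)}$. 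Since replacing $\omega$ by an equivalent weight merely rescales each norm $\|\cdot\|_{\mathcal{D}^{M_p,h}_{L^\infty_\omega}}$ and $\|\cdot\|_{\mathcal{D}^{M_p,r_j}_{L^\infty_\omega}}$ by a constant, both $\mathcal{D}^{\{M_p\}}_{L^\infty_\omega}(\R^d)$ and $\widetilde{\mathcal{D}}^{\{M_p\}}_{L^\infty_\omega}(\R^d)$ are unchanged as locally convex spaces if $\omega$ is replaced by $\widetilde\omega$; so I may assume $\omega$ is positive and continuous.

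Now take $\mathcal{V} = (v_n)_{n\in\N}$ with $v_n := \omega$ for all $n$. This is a decreasing weight system; it is $A_p$-admissible precisely because of the assumed translation bound (same $\tau$, and $m=n$ for every $n$); and it satisfies condition $(V)$ because every constant weight system does (Remark~\ref{remark-S}). The inductive spectrum $(\mathcal{D}^{M_p,1/n}_{L^\infty_\omega})_{n}$ defining $\mathcal{B}^{\{M_p\}}_\mathcal{V}(\R^d)$ is cofinal in the spectrum $(\mathcal{D}^{M_p,h}_{L^\infty_\omega})_{h>0}$ defining $\mathcal{D}^{\{M_p\}}_{L^\infty_\omega}(\R^d)$, so these two $(LB)$-spaces coincide. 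Moreover, for the constant system $\overline{V}(\mathcal{V})$ is exactly the set of non-negative upper semicontinuous $v$ with $C_v := \sup_x v(x)/\omega(x) < \infty$; it contains $\omega$, and every $v\in\overline{V}(\mathcal{V})$ obeys $v \leq C_v\omega$, hence $\|\cdot\|_{\mathcal{D}^{M_p,r_j}_{L^\infty_v}} \leq C_v\|\cdot\|_{\mathcal{D}^{M_p,r_j}_{L^\infty_\omega}}$. Therefore $\widetilde{\mathcal{B}}^{\{M_p\}}_\mathcal{V}(\R^d)$ and $\widetilde{\mathcal{D}}^{\{M_p\}}_{L^\infty_\omega}(\R^d)$ have the same underlying set and equivalent generating systems of seminorms, so they coincide as locally convex spaces. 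Theorem~\ref{main-theorem} applied to $\mathcal{V}$ then yields the assertion.

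I do not foresee a genuine obstacle: once the weight has been regularized, the remainder is elementary bookkeeping. The only mildly delicate points — that mollifying $\omega$ preserves both the submultiplicative-type bound and the equivalence class of the weight, and that for a constant weight system the Nachbin family $\overline{V}(\mathcal{V})$ contributes nothing beyond $\omega$ itself at the level of seminorms — are routine.
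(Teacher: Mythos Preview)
Your proposal is correct and follows essentially the same route as the paper: regularize $\omega$ by convolution, apply Theorem~\ref{main-theorem} to the constant weight system $\mathcal{V}=(\omega)_{n\in\N}$ (which is $A_p$-admissible and satisfies $(V)$), and identify $\mathcal{B}^{\{M_p\}}_\mathcal{V}=\mathcal{D}^{\{M_p\}}_{L^\infty_\omega}$ and $\widetilde{\mathcal{B}}^{\{M_p\}}_\mathcal{V}=\widetilde{\mathcal{D}}^{\{M_p\}}_{L^\infty_\omega}$. Your treatment is in fact slightly more careful than the paper's in two places: you justify the two-sided equivalence $c\,\omega\leq\widetilde\omega\leq C'\omega$ explicitly, and you describe $\overline{V}(\mathcal{V})$ correctly as all nonnegative upper semicontinuous $v$ with $\sup v/\omega<\infty$ (the paper writes $\overline{V}(\mathcal{V})=\{\lambda\omega:\lambda>0\}$, which is literally too small but yields the same seminorm system).
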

\begin{proof}
We may assume without loss of generality that $\omega$ is continuous (for otherwise we may replace $\omega$ with the
continuous weight $\tilde \omega = \omega \ast \varphi$, where $\varphi \in \mathcal{D}(\R^d)$ is non-negative and satisfies $\int_{\R^d} \varphi(t) \dt = 1$, since $\mathcal{D}^{\{M_p\}}_{L^\infty_\omega}(\R^d) = \mathcal{D}^{\{M_p\}}_{L^\infty_{\tilde \omega}}(\R^d)$ and $\widetilde{\mathcal{D}}^{\{M_p\}}_{L^\infty_\omega}(\R^d) = \widetilde{\mathcal{D}}^{\{M_p\}}_{L^\infty_{\tilde \omega}}(\R^d)$ topologically). We set $\mathcal{V} = (\omega)_{n \in \N}$ and notice that $\mathcal{V}$ satisfies $(V)$ (see Remark \ref{remark-S}). Hence, by Theorem \ref{main-theorem}, we find that $\mathcal{D}^{\{M_p\}}_{L^\infty_\omega}(\R^d) = \widetilde{\mathcal{B}}^{\{M_p\}}_{\mathcal{V}}(\R^d)$ topologically. The result now follows from the fact that $
\overline{V}(\mathcal{V}) = \{ \lambda\omega \, : \, \lambda > 0 \}$ and, thus, $\widetilde{\mathcal{B}}^{\{M_p\}}_{\mathcal{V}}(\R^d) = \widetilde{\mathcal{D}}^{\{M_p\}}_{L^\infty_\omega}(\R^d)$ topologically.
\end{proof}
\begin{remark}
Theorem \ref{main-theorem-cor} was already shown in \cite[Thm.\ 4.17]{D-P-V} under much more restrictive conditions on $M_p$ and $A_p$ and with a more complicated proof.
\end{remark}

In \cite[Thm.\ 5.9]{Debrouwere-VindasUH2016} we have shown that, if $M_p$ satisfies $(M.1)$ and $(M.2)$ (cf.\ \cite{Komatsu}), the space $\mathcal{S}^{(M_p)}_{(p!)}(\R^d)$ is non-trivial if and only if $(\log p)^p \prec M_p$ (the latter means, as usual, that $M_p^{1/p}/\log p\to\infty$). Hence, we obtain the ensuing corollary.
\begin{corollary}
Let $M_p$ be a weight sequence satisfying $(M.1)$ and $(M.2)$ such that $(\log p)^p \prec M_p$ and let $\omega$ be a positive measurable function on $\R^d$ such that
$\omega(x +y) \leq C\omega(x)e^{\tau |y|}$, $x,y \in \R^d$, for some $C, \tau > 0$. Then, $\mathcal{D}^{\{M_p\}}_{L^\infty_\omega}(\R^d)$ and $\widetilde{\mathcal{D}}^{\{M_p\}}_{L^\infty_\omega}(\R^d)$ coincide topologically.
\end{corollary}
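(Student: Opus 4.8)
The plan is to obtain this as the instance $A_p = p!$ of Theorem \ref{main-theorem-cor}. First I would verify that $A_p = p!$ is itself a weight sequence satisfying $(M.1)$ and $(M.2)'$: one has $(p!)^2/\bigl((p-1)!\,(p+1)!\bigr) = p/(p+1) \le 1$, which is $(M.1)$, and $(p+1)! = (p+1)\,p! \le 2^{\,p+1}\,p!$ for all $p \in \N$, which is $(M.2)'$ with $H = C_0 = 2$; moreover $\lim_{p\to\infty} p!/(p-1)! = \infty$, so $p!$ is indeed a weight sequence. Since the hypotheses of the corollary include $(M.1)$ and the stronger condition $(M.2)$ for $M_p$, and $(M.2)$ implies $(M.2)'$ (specialize to $q = 1$), the pair $(M_p, p!)$ satisfies the standing assumptions of Theorem \ref{main-theorem-cor}.

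Next I would check the non-triviality hypothesis $\mathcal{S}^{(M_p)}_{(p!)}(\R^d) \neq \{0\}$. This is exactly \cite[Thm.\ 5.9]{Debrouwere-VindasUH2016}, which asserts that, for $M_p$ satisfying $(M.1)$ and $(M.2)$, the space $\mathcal{S}^{(M_p)}_{(p!)}(\R^d)$ is non-trivial if and only if $(\log p)^p \prec M_p$; and $(\log p)^p \prec M_p$ is among the hypotheses of the corollary.

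The remaining point is to rewrite the assumed growth estimate $\omega(x+y) \le C\omega(x)e^{\tau|y|}$ in the form demanded by Theorem \ref{main-theorem-cor}, i.e.\ $\omega(x+y) \le C'\omega(x)e^{A(\tau' y)}$ where $A$ denotes the associated function of $p!$. Writing $A(t) = \sup_{p\in\N}\log\bigl(t^p/p!\bigr)$, one has $A(t) \le t$ for all $t \ge 0$, while Stirling's formula gives $A(t) = t + O(\log t)$, hence $A(t)/t \to 1$ as $t \to \infty$. Consequently there is $R > 0$ with $A(2\tau|y|) \ge \tau|y|$ whenever $|y| \ge R$, and trivially $\tau|y| \le \tau R \le A(2\tau|y|) + \tau R$ when $|y| \le R$; thus $e^{\tau|y|} \le e^{\tau R}\,e^{A(2\tau y)}$ for every $y \in \R^d$, and the hypothesis of Theorem \ref{main-theorem-cor} holds with $\tau' = 2\tau$ and $C' = Ce^{\tau R}$.

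With all the hypotheses of Theorem \ref{main-theorem-cor} verified for $A_p = p!$ and the given $\omega$, that theorem yields the topological identity $\mathcal{D}^{\{M_p\}}_{L^\infty_\omega}(\R^d) = \widetilde{\mathcal{D}}^{\{M_p\}}_{L^\infty_\omega}(\R^d)$, which is the claim. There is no genuine obstacle in this argument; the only step requiring (routine) attention is the elementary equivalence, just carried out, between growth conditions formulated with $e^{\tau|y|}$ and those formulated with the associated function of the Gevrey sequence $p!$.
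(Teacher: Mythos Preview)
Your proposal is correct and follows exactly the route the paper intends: the corollary is stated immediately after the sentence invoking \cite[Thm.\ 5.9]{Debrouwere-VindasUH2016}, and the paper gives no separate proof, so the implicit argument is precisely ``apply Theorem~\ref{main-theorem-cor} with $A_p = p!$, using the cited non-triviality criterion.'' You have simply made the verification of the hypotheses explicit, including the elementary comparison between $e^{\tau|y|}$ and $e^{A(\tau' y)}$ for the associated function of $p!$, which the paper leaves to the reader.
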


\end{document}